\def\AA{{\mathbb A}}
\def\CC{{\mathbb C}}
\def\FF{{\mathbb F}}
\def\GG{{\mathbb G}}
\def\KK{{\mathbb K}}
\def\NN{{\mathbb N}}
\def\QQ{{\mathbb Q}}
\def\PP{{\mathbb P}}
\def\QQ{{\mathbb Q}}
\def\RR{{\mathbb R}}
\def\TT{{\mathbb T}}
\def\ZZ{{\mathbb Z}}
\def\fp{{\mathfrak p}}
\def\0{{\mathbf 0}}
\def\1{{\mathbf 1}}
\def\Abf{{\mathbf A}}
\def\Pbf{{\mathbf P}}
\def\Sbf{{\mathbf S}}
\def\Tbf{{\mathbf T}}
\def\Acal{{\mathcal A}}
\def\Ccal{{\mathcal C}}
\def\Ocal{{\mathcal O}}
\def\Xcal{{\mathcal X}}
\def\ch{\mathrm{char}}
\def\Gal{\mathrm{Gal}}
\def\Spec{\mathrm{Spec}}
\def\Proj{\mathrm{Proj}}
\def\sch{\mathrm{sch}}
\def\sup{\mathrm{sup}}
\theoremstyle{plain}
\newtheorem{thm}{Theorem}
\newtheorem{cor}[thm]{Corollary}
\newtheorem{prop}[thm]{Proposition}
\newtheorem{lem}[thm]{Lemma}
\theoremstyle{remark}
\begin{document}

\title[Weak Convergence on Berkovich Projective Space]{A Criterion for Weak Convergence on Berkovich Projective Space}
\author{Clayton Petsche}

\address{Clayton Petsche; Department of Mathematics and Statistics; Hunter College; 695 Park Avenue; New York, NY 10065 U.S.A.}
\email{cpetsche@hunter.cuny.edu}

\thanks{The author is partially supported by NSF Grant DMS-0901147.}
\begin{abstract}
We give a criterion for the weak convergence of unit Borel measures on the $N$-dimensional Berkovich projective space $\Pbf^{N}_K$ over a complete non-archimedean field $K$.  As an application, we give a sufficient condition for a certain type of equidistribution on $\Pbf^{N}_K$ in terms of a weak Zariski-density property on the scheme-theoretic projective space $\PP^N_{\tilde{K}}$ over the residue field $\tilde{K}$.  As a second application, in the case of residue characteristic zero we give an ergodic-theoretic equidistribution result for  the powers of a point $a$ in the $N$-dimensional unit torus $\TT^N_K$ over $K$.  This is a non-archimedean analogue of a well-known result of Weyl over $\CC$, and its proof makes essential use of a theorem of Mordell-Lang type for $\GG_m^N$ due to Laurent.
\end{abstract}

\maketitle


\section{Introduction}  

\subsection{}  Let $K$ be a field which is complete with respect to a nontrivial, non-archimedean absolute value.  Given an integer $N\geq1$, the $N$-dimensional projective space $\PP^N(K)$ is compact (with respect to its Hausdorff analytic topology) if and only if the field $K$ is locally compact, and this occurs only when $K$ has both a discrete value group and a finite residue field.  On the other hand, the $N$-dimensional Berkovich projective space $\Pbf^{N}_K$ over $K$ is a Hausdorff space which contains the ordinary projective space $\PP^N(K)$ as a subspace, and $\Pbf^{N}_K$ is always compact, regardless of whether or not $K$ is locally compact.  Moreover, the Hausdorff topology on $\Pbf^{N}_K$ is closely related not only to the analytic topology on $\PP^N(K)$, but also to the Zariski topology on the scheme-theoretic projective space $\PP^N_{\tilde{K}}$ over the residue field $\tilde{K}$.  For these and other reasons, there are many situations in which it is preferable to work on the larger space $\Pbf^{N}_K$ rather than $\PP^N(K)$ itself.

An example of an analytic notion which is best studied on compact spaces is that of equidistribution.  For each integer $\ell\geq1$, let $Z_\ell$ be a finite multiset of points in $\Pbf^{N}_K$ (a multiset is a set whose points occur with multiplicities), and let $\mu$ be a unit Borel measure on $\Pbf^{N}_K$.  The sequence $\langle Z_\ell\rangle_{\ell=1}^{+\infty}$ is said to be $\mu$-equidistributed if the limit
\begin{equation}\label{IntroEqui}
\lim_{\ell\to+\infty}\frac{1}{|Z_\ell|}\sum_{z\in Z_\ell}\varphi(z) = \int\varphi d\mu
\end{equation}
holds for all continuous functions $\varphi:\Pbf^{N}_K\to\RR$.  Many researchers have been working recently to establish equidistribution results on Berkovich analytic spaces, often for sequences $\langle Z_\ell\rangle_{\ell=1}^{+\infty}$ arising naturally from questions in arithmetic geometry and dynamical systems; we mention \cite{BakerPetsche}, \cite{BakerRumely}, \cite{ChambertLoir}, \cite{Faber}, \cite{FavreRiveraLetelier}, and \cite{Gubler} to name a few examples of such work.

The main result of this paper implies that, in order to establish the equidistribution of a sequence $\langle Z_\ell\rangle_{\ell=1}^{+\infty}$ of multisets in $\Pbf^{N}_K$, it suffices to check $(\ref{IntroEqui})$ for a special class of continuous functions $\varphi$ arising naturally from the geometric structure of $\Pbf^{N}_K$.  Since it requires no extra work to do so, we formulate our main result using the more general notion of weak convergence of measures on $\Pbf^{N}_K$, although our motivating concern is with equidistribution.  Moreover, for reasons which we will discuss below, our main result will be stated using nets, rather than sequences; again this requires no extra effort.

\medskip

We give two applications of our main result, both of which establish equidistribution theorems with respect to the Dirac measure $\delta_\gamma$ supported at the Gauss point $\gamma$ of $\Pbf^{N}_K$.  Letting $\PP^N_{\tilde{K}}$ denote the scheme-theoretic projective space over the residue field $\tilde{K}$, there exists a natural reduction map $r:\Pbf^{N}_K\to\PP^N_{\tilde{K}}$, and the Gauss point $\gamma$ can be described as the unique point of $\Pbf^{N}_K$ reducing to the generic point of $\PP^N_{\tilde{K}}$.  Our first application gives a useful necessary and sufficient condition for $\delta_\gamma$-equidistribution, and uses this to establish the $\delta_\gamma$-equidistribution of nets whose reduction satisfy a certain weak Zariski-density property.

\medskip

Our second application is an ergodic-theoretic equidistribution result for the sequence formed by taking the powers of a point in the $N$-dimensional unit torus 
\begin{equation*}
\TT^N_K=\{(a_0:a_1:\dots:a_N)\in\PP^N(K) \mid |a_0|=|a_1|=\dots=|a_N|=1\}
\end{equation*}
over $K$.  Identifying the group variety $\GG_m^N$ over the residue field $\tilde{K}$ with the subvariety of $\PP^N$ defined by $x_0x_1\dots x_N\neq0$, the reduction map $r:\Pbf^{N}_K\to\PP^N_{\tilde{K}}$ restricts to a map $r:\TT^N_K\to\GG_m^N(\tilde{K})$.  A point $\tilde{a}$ in $\GG_m^N(\tilde{K})$ is said to be non-degenerate if it is not contained in any proper algebraic subgroup of $\GG_m^N$.

\begin{thm}\label{NonArchErgIntro}
Assume that the residue field $\tilde{K}$ has characteristic zero.  Let $a\in\TT^N_K$, and for each integer $\ell\geq1$, define $Z_\ell=\{a,a^2,\dots,a^\ell\}$, considered as a multiset in $\TT^N_K\subset\PP^N(K)\subset\Pbf_K^N$ of cardinality $|Z_\ell|=\ell$.  The sequence $\langle Z_\ell\rangle_{\ell=1}^{\infty}$ is $\delta_\gamma$-equidistributed in $\Pbf_K^N$ if and only if the point $\tilde{a}$ is non-degenerate in $\GG_m^N(\tilde{K})$.
\end{thm}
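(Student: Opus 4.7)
The plan is to reduce both directions to a statement about the reductions $\tilde{Z}_\ell = \{\tilde{a}, \tilde{a}^2, \dots, \tilde{a}^\ell\} \subset \GG_m^N(\tilde{K})$, and then to invoke the paper's first application of its main theorem, which gives a criterion for $\delta_\gamma$-equidistribution in $\Pbf_K^N$ in terms of a weak Zariski-density property of the reduced multisets in $\PP^N_{\tilde K}$. The characteristic-zero hypothesis on $\tilde{K}$ enters through Laurent's theorem of Mordell--Lang type, which controls how a finitely generated subgroup of $\GG_m^N(\tilde{K})$ --- in particular the cyclic group $\Gamma = \langle \tilde{a} \rangle$ --- meets proper closed subvarieties.

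For the ``if'' direction, assume $\tilde{a}$ is non-degenerate, and let $V \subsetneq \PP^N_{\tilde K}$ be a proper closed subvariety. Since every iterate $\tilde{a}^n$ lies in the open subvariety $\GG_m^N$, one may replace $V$ by $V \cap \GG_m^N$ and bound $|\Gamma \cap V|$ instead. Laurent's theorem writes $\Gamma \cap V$ as a finite union of cosets of the form $\tilde{a}^{n_0} \cdot \langle \tilde{a}^d \rangle$, each one contained in $V$. If such a coset were infinite, its Zariski closure $\tilde{a}^{n_0} \cdot \overline{\langle \tilde{a}^d \rangle}$ would still be contained in $V$. The key observation is that since $\tilde{a}$ lies in no proper algebraic subgroup, the same is true of every power $\tilde{a}^d$ with $d \geq 1$ --- any relation $(\tilde{a}^d)^{\mathbf{m}} = 1$ forces $\tilde{a}^{d\mathbf{m}} = 1$ with $d\mathbf{m} \neq \mathbf{0}$ --- so $\overline{\langle \tilde{a}^d \rangle}$ equals $\GG_m^N$. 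This would give $\GG_m^N \subset V$, contradicting properness. Hence $\Gamma \cap V$ is finite, so $|\tilde{Z}_\ell \cap V|/\ell \to 0$, supplying the weak Zariski-density hypothesis of the first application and yielding $\delta_\gamma$-equidistribution. Translating non-degeneracy into this density-zero statement via Laurent's theorem is the main technical step of the argument.

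For the converse, suppose $\tilde{a}$ lies in a proper algebraic subgroup $H \subsetneq \GG_m^N$. Then every power $\tilde{a}^n$ does as well, so $r(Z_\ell) \subset H$ for all $\ell$. Letting $\bar H$ denote the closure of $H$ in $\PP^N_{\tilde K}$ --- a proper closed subvariety --- the anti-continuity of the reduction map $r: \Pbf_K^N \to \PP^N_{\tilde K}$ makes $F := r^{-1}(\bar H)$ a closed subset of $\Pbf_K^N$ containing every $Z_\ell$. Since the Gauss point $\gamma$ reduces to the generic point of $\PP^N_{\tilde K}$, which does not lie in the proper set $\bar H$, we have $\gamma \notin F$. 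The probability measures $\tfrac{1}{\ell}\sum_{z \in Z_\ell}\delta_z$ each assign mass $1$ to $F$, so any weak limit must also assign mass $1$ to $F$, and in particular cannot equal $\delta_\gamma$.
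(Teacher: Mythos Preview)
Your ``if'' direction is correct and follows the paper's route: the paper packages the Laurent-based argument as Proposition~\ref{NonDegenerateProp} (showing $\tilde{a}^{\ZZ}\cap W$ is finite for every proper Zariski-closed $W$), then feeds this into Theorem~\ref{MainEquiTheorem}. Your phrasing of Laurent's theorem in terms of cosets of subgroups of $\Gamma$ rather than cosets of algebraic subgroups of $\GG_m^N$ is a standard consequence and the subsequent reasoning is sound.

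The ``only if'' direction, however, contains a genuine error. Anti-continuity of the reduction map means that the preimage of a Zariski-\emph{open} set is closed; equivalently, the preimage of a Zariski-closed set is \emph{open}. Hence your set $F=r^{-1}(\bar H)$ is open in $\Pbf_K^N$, not closed. (Concretely, if $\bar H\subset V(\tilde f)$ for some normalized homogeneous $f$, then $r^{-1}(V(\tilde f))=\{z:\lambda_f(z)<1\}$, visibly open; in $\Pbf^1_K$ the preimage of a closed point of $\PP^1_{\tilde K}$ is an open residue disc.) With $F$ open, the Portmanteau-type claim ``any weak limit assigns mass $1$ to $F$'' fails: for open sets weak convergence only gives $\liminf_\ell\mu_\ell(F)\geq\mu(F)$, which is vacuous here. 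One cannot repair this by passing to the closure $\overline{F}$, since the Gauss point $\gamma$ lies in the closure of every such open tube.

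The paper argues differently and more concretely. From a nontrivial relation $\tilde a_1^{\ell_1}\cdots\tilde a_N^{\ell_N}=1$ it builds an explicit normalized homogeneous polynomial $f$ and checks that $\lambda_f(a^j)\leq|A|$ \emph{uniformly} in $j\geq1$, where $A=a_1^{\ell_1}\cdots a_N^{\ell_N}-1$ satisfies $|A|<1$. Since $\lambda_f(\gamma)=1$, this uniform bound (not merely $\lambda_f(a^j)<1$) contradicts the criterion of Corollary~\ref{MainCor}. The missing ingredient in your approach is precisely this uniformity, which the purely topological statement ``$r(Z_\ell)\subset\bar H$'' does not supply.
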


This theorem is a non-archimedean analogue of a well-known archimedean equidistribution result of Weyl \cite{Weyl}.  Given a point $a$ in the compact unit torus $\TT^N_\CC$ over $\CC$, Weyl's result gives necessary and sufficient conditions for the Haar-equidistribution of the sets $Z_\ell=\{a,a^2,\dots,a^\ell\}$.  Usually stated in its additive (rather than multiplicative) form, this theorem is often presented as the first nontrivial example of  ``uniform distribution modulo 1''.  We will give a more detailed discussion of Weyl's theorem and related results in $\S$~\ref{ErgodicSect}.  An essential ingredient in our proof of Theorem~\ref{NonArchErgIntro} is a theorem of Mordell-Lang type on the group variety $\GG_m^N$, due to Laurent \cite{Laurent}.

\medskip

This paper is organized as follows:
\begin{itemize}
	\item In $\S$~\ref{PrelimSect} we fix some notation and terminology.
	\item In $\S$~\ref{AffineSect} we review the definitions of the Berkovich affine and projective spaces $\Abf^{N+1}_K$ and $\Pbf^{N}_K$, and we establish the needed topological properties of these spaces.  
	\item In $\S$~\ref{MainThmSect} we prove the main result of this paper, namely the criterion for weak convergence of unit Borel measures on $\Pbf^{N}_K$. 
	\item In $\S$~\ref{ReductionSect} and $\S$~\ref{ErgodicSect} we present the two applications.
\end{itemize}

The author would like to acknowledge the anonymous referee for his or her generous help, including the suggestion of Theorem~\ref{DeltaGammaEquiTheorem}, and X. Faber for several helpful suggestions.

\subsection{}\label{PrelimSect}  Throughout this paper $K$ denotes a field which is complete with respect to a nontrivial, non-archimedean absolute value $|\cdot|$.  Denote by $K^\circ=\{a\in K\mid |a|\leq1\}$ the valuation ring of $K$, by $K^{\circ\circ}=\{a\in K\mid |a|<1\}$ its maximal ideal, and by $\tilde{K}=K^\circ/K^{\circ\circ}$ its residue field.  Given an element $a\in K^\circ$, we denote by $\tilde{a}$ the image of $a$ under the quotient map $K^\circ\to\tilde{K}$.  Let $\KK$ be the completion of an algebraic closure of $K$ with respect to the unique extension of $|\cdot|$; thus $\KK$ is both complete and algebraically closed (\cite{BGR} $\S$3.4).  Define $\KK^\circ$, $\KK^{\circ\circ}$, and $\tilde{\KK}$ analogously.

Let $N\geq0$ be an integer, and let $K[X]=K[X_0,X_1,\dots,X_N]$ be the polynomial ring over $K$ in the $N+1$ variables $X=(X_0,X_1,\dots,X_N)$.  By a multiplicative seminorm on $K[X]$ extending $|\cdot|$ we mean a nonnegative real-valued function $[\cdot]$ on $K[X]$ satisfying $[a]=|a|$ for all constants $a\in K$, and satisfying $[f+g]\leq\max\{[f],[g]\}$ and $[fg]=[f][g]$ for all pairs $f,g\in K[X]$.

Given an arbitrary polynomial $f\in K[X]$, denote by $H(f)$ the maximum absolute value of the coefficients of $f$.  Thus $H(f)\leq1$ if and only if $f$ is defined over the valuation ring $K^\circ$; in this case we denote by $\tilde{f}\in\tilde{K}[X]$ the reduction of $f$.  If $H(f)=1$, we say that $f$ is normalized.

If the non-archimedean field $K$ has a countable dense subset, then it is possible to show using the Urysohn metrization theorem that the space $\Pbf^{N}_K$ is metrizable; see \cite{BakerRumelyBook} $\S$~1.5.  In general, however, $\Pbf^{N}_K$ is not homeomorphic to a metric space.  Consequently, notions of convergence in $\Pbf^{N}_K$ are best studied using nets, rather than sequences.

Briefly, a net in a set $\Xcal$ is a function $\alpha\mapsto x_\alpha$ from a directed set $I$ into $\Xcal$; it is usually denoted by $\langle x_\alpha\rangle$, suppressing the dependence on $I$.  A sequence in $\Xcal$ is simply a net in $\Xcal$ indexed by the directed set $\NN=\{1,2,3,\dots\}$ of positive integers.  In order to distinguish them from arbitrary nets, we will generally refer to sequences using the notation $\langle x_\ell\rangle_{\ell=1}^{+\infty}$.  If the set $\Xcal$ is a metric space, then many familiar topological concepts can be reformulated in terms of convergence properties of sequences in $\Xcal$.  These results continue to hold when $\Xcal$ is an arbitrary Hausdorff space, but only if one takes care to properly interpret the statements using nets in place of sequences.  We refer the reader to Folland \cite{Folland} $\S$~4.3 for a treatment of nets and their convergence properties.

\section{Berkovich Affine and Projective Space}\label{AffineSect}

\subsection{}\label{AffineSubSect}  The Berkovich affine space $\Abf^{N+1}_K$ over $K$ is defined to be the set of multiplicative seminorms on $K[X]$ extending $|\cdot|$.  As a matter of notation, we will refer to a point $\zeta\in\Abf^{N+1}_K$, and denote by $[\cdot]_\zeta$ its corresponding seminorm.  The topology on $\Abf^{N+1}_K$ is defined to be the weakest topology with respect to which the real-valued functions $\zeta\mapsto[f]_\zeta$ are continuous for all $f\in K[X]$.  Equivalently, define a family of subsets of $\Abf^{N+1}_K$ by 
\begin{equation*}
U_{s,t}(f) = \{\zeta\in\Abf^{N+1}_K \mid s<[f]_\zeta<t\} 
\end{equation*}
for $f\in K[X]$ and $s,t\in\RR$.  By definition, the subsets $U_{s,t}(f)$ generate a base of open sets for the topology on $\Abf^{N+1}_K$.

To see the relation between $\Abf^{N+1}_K$ and the classical affine space $K^{N+1}$, consider a point $a\in K^{N+1}$.  Letting $[\cdot]_a$ denote the multiplicative seminorm defined by evaluation $[f]_a=|f(a)|$, we obtain a continuous embedding $K^{N+1}\hookrightarrow\Abf^{N+1}_K$ given by $a\mapsto[\cdot]_a$.  We regard this embedding as an inclusion $K^{N+1}\subset\Abf^{N+1}_K$ by identifying $K^{N+1}$ with its image in $\Abf^{N+1}_K$.  More generally, evaluation $a\mapsto[\cdot]_a$ defines a map $\KK^{N+1}\to\Abf^{N+1}_K$ whose image is homeomorphic to the quotient of $\KK^{N+1}$ by the action of $\Gal(\KK/K)$.

The classical affine space $K^{N+1}$ is always a proper subset of $\Abf^{N+1}_K$.  For example, an important class of points in $\Abf^{N+1}_K$ arises by considering polydiscs
\begin{equation*}
D(c,r) = \{a\in \KK^{N+1} \mid |c_n-a_n|\leq r_n\text{ for all }0\leq n\leq N\}
\end{equation*}
with center $c\in \KK^{N+1}$ and polyradius $r\in|\KK|^{N+1}$.  The function $[\cdot]_{\zeta_{c,r}}:K[X]\to\RR$ defined by the supremum $[f]_{\zeta_{c,r}}:=\sup\{|f(a)|\mid a\in D(c,r) \}$ is a multiplicative seminorm on $K[X]$ extending $|\cdot|$, and therefore it defines a point of $\Abf^{N+1}_K$; it is convenient to denote this point by $\zeta_{c,r}$.

\subsection{}\label{AffineNormSect}  Define a function $\|\cdot\|:\Abf^{N+1}_K\to\RR$ by 
\begin{equation*}
\|\zeta\|= \max\{[X_0]_\zeta,[X_1]_\zeta,\dots,[X_N]_\zeta\}.
\end{equation*}
Observe that $\|\cdot\|$ is continuous, $\|\zeta\|\geq0$ for all $\zeta\in\Abf^{N+1}_K$, and $\|\zeta\|=0$ if and only if $\zeta$ is the point $0=(0,0,\dots,0)$ corresponding to the origin in $K^{N+1}\subset\Abf^{N+1}_K$.  For each real number $r>0$, define two subsets of $\Abf^{N+1}_K$ by
\begin{equation*}
E_r = \{\zeta\in\Abf^{N+1}_K\mid \|\zeta\|\leq r\} \hskip1cm U_r = \{\zeta\in\Abf^{N+1}_K\mid \|\zeta\|< r\}.
\end{equation*}
$U_r$ is clearly open, and in proving the following proposition we will see that $E_r$ is compact.  

\begin{prop}\label{AffineLocComp}
$\Abf^{N+1}_K$ is a locally compact Hausdorff space.
\end{prop}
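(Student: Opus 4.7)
The plan is to verify the Hausdorff property directly from the subbasic description of the topology, and to establish local compactness by showing that for every $r>0$ the set $E_r$ is compact (from which local compactness follows immediately, since every $\zeta\in\Abf^{N+1}_K$ lies in the open set $U_r$ contained in $E_r$ whenever $r>\|\zeta\|$).

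The Hausdorff property is straightforward: if $\zeta\neq\zeta'$, then by definition of $\Abf^{N+1}_K$ as a set of seminorms there is some $f\in K[X]$ with $[f]_\zeta\neq[f]_{\zeta'}$, and choosing disjoint open intervals $(s_1,t_1)$ and $(s_2,t_2)$ in $\RR$ around these two values gives disjoint open neighborhoods $U_{s_1,t_1}(f)$ and $U_{s_2,t_2}(f)$.

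The main work goes into the compactness of $E_r$, for which I would apply Tychonoff's theorem. The key estimate is that for $\zeta\in E_r$ and any polynomial $f=\sum_\alpha a_\alpha X^\alpha\in K[X]$, the ultrametric inequality and multiplicativity give
\begin{equation*}
[f]_\zeta \leq \max_\alpha |a_\alpha|[X_0]_\zeta^{\alpha_0}\cdots[X_N]_\zeta^{\alpha_N}\leq M_f(r):=\max_\alpha |a_\alpha|\,r^{|\alpha|}.
\end{equation*}
Consequently the evaluation map $\Phi:\zeta\mapsto([f]_\zeta)_{f\in K[X]}$ sends $E_r$ into the compact Hausdorff product $P:=\prod_{f\in K[X]}[0,M_f(r)]$. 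By construction the topology on $\Abf^{N+1}_K$ is the initial topology for the family $\{\zeta\mapsto[f]_\zeta\}$, which is precisely the topology pulled back from $P$ under $\Phi$; hence $\Phi$ is a topological embedding of $E_r$ onto its image.

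It then remains to show $\Phi(E_r)$ is closed in $P$, and this is where one must be a little careful but not clever: the defining conditions of a multiplicative seminorm extending $|\cdot|$ translate into closed conditions on $P$, each involving only finitely many coordinates. Specifically, for each $a\in K$ the condition $[a]_\zeta=|a|$ cuts out a closed slice; for each pair $f,g\in K[X]$ the inequality $[f+g]_\zeta\leq\max\{[f]_\zeta,[g]_\zeta\}$ and the equality $[fg]_\zeta=[f]_\zeta[g]_\zeta$ are each closed conditions in the $(f,g,f+g)$- and $(f,g,fg)$-coordinates respectively, using continuity of $\max$ and multiplication on $\RR$. Intersecting these (arbitrarily many) closed sets yields $\Phi(E_r)$, which is therefore closed in $P$, hence compact. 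The main obstacle is really just the bookkeeping of verifying that the initial topology on $E_r$ does coincide with the subspace topology from $P$ and that all the seminorm axioms are closed conditions in the product; no deep idea beyond Tychonoff is needed.
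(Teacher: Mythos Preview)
Your proof is correct and follows essentially the same strategy as the paper: both verify the Hausdorff property via the subbasic open sets $U_{s,t}(f)$, and both establish local compactness by embedding $E_r$ into the Tychonoff product $\prod_{f\in K[X]}[0,C_{f,r}]$ and showing the image is compact. The only cosmetic difference is that the paper phrases the closedness argument in terms of nets (take a convergent subnet in the product and check that the coordinatewise limit again satisfies the axioms of a multiplicative seminorm extending $|\cdot|$), whereas you carve out the image directly as an intersection of closed conditions coming from those axioms; these are two standard and equivalent formulations of the same fact.
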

\begin{proof}
To show that $\Abf^{N+1}_K$ is Hausdorff, let $\zeta,\zeta'\in\Abf^{N+1}_K$ be distinct points.  Then $[f]_\zeta\neq[f]_{\zeta'}$ for some $f\in K[X]$.  Selecting disjoint open intervals $(s,t)$ and $(s',t')$ containing $[f]_\zeta$ and $[f]_{\zeta'}$, respectively, it follows that $U_{s,t}(f)$ and $U_{s',t'}(f)$ are disjoint open neighborhoods of $\zeta$ and $\zeta'$, respectively.  

We will now show that the sets $E_r$ are compact; since $\Abf^{N+1}_K=\cup_{r>0}U_r$, it will follow at once that $\Abf^{N+1}_K$ is locally compact.  In order to show that $E_r$ is compact it suffices to show that every net $\langle \zeta_\alpha\rangle$ in $E_r$ has a subnet converging to a limit in $E_r$.  For each $f\in K[X]$, there exists a constant $C_{f,r}>0$ such that $[f]_\zeta\leq C_{f,r}$ for all $\zeta\in E_r$.  (Each seminorm $[\cdot]_\zeta$ satisfies the ultrametric inequality, so one could take $C_{f,r} = H(f)r^{\deg(f)}$.)  Therefore the association $\zeta\mapsto[f]_\zeta$ defines a continuous map $E_r\to[0,C_{f,r}]$.  Letting $\Pi$ denote the product $\prod_{f\in K[X]}[0,C_{f,r}]$, we obtain a continuous map $\iota:E_r\to\Pi$, which is injective by the definition of $\Abf^{N+1}_K$.  Since $\Pi$ is compact (Tychonoff's theorem, \cite{Folland} $\S$~4.6), the net $\langle \iota(\zeta_\alpha)\rangle$ has a subnet $\langle \iota(\zeta_\beta)\rangle$ converging to some point $(\xi_f)_{f\in K[X]}\in\Pi$.  Define a function $[\cdot]_\xi:K[X]\to\RR$ by $[f]_\xi=\xi_f$.  Then $[\cdot]_\xi$ is a multiplicative seminorm on $K[X]$ restricting to $|\cdot|$ on $K$, and thus it corresponds to a point $\xi\in\Abf^{N+1}_K$.  Moreover, $\xi\in E_r$ and $\zeta_\beta\to \xi$, as desired, completing the proof that $E_r$ is compact.
\end{proof}

\subsection{}\label{BerkProjSect}  Assume now that $N\geq1$.  Define an equivalence relation $\sim$ on $\Abf^{N+1}_K\setminus\{0\}$ by declaring that $\zeta\sim \xi$ if and only if there exists a constant $\lambda>0$ such that $[f]_\zeta=\lambda^{\deg(f)}[f]_\xi$ for all homogeneous polynomials $f\in K[X]$.  The Berkovich projective space $\Pbf^{N}_K$ is defined to be the quotient of $\Abf^{N+1}_K\setminus\{0\}$ modulo $\sim$, endowed with the quotient topology; denote by $\pi:\Abf^{N+1}_K\setminus\{0\}\to\Pbf^{N}_K$ the quotient map.

The embedding $K^{N+1}\hookrightarrow\Abf^{N+1}_K$ discussed in $\S$~\ref{AffineSect} restricts to a map $K^{N+1}\setminus\{0\}\to\Abf^{N+1}_K\setminus\{0\}$, which descends modulo $\sim$ to an embedding $\PP^N(K)\hookrightarrow\Pbf^N_K$.  We again regard this map as an inclusion $\PP^N(K)\subset\Pbf^N_K$ by identifying $\PP^N(K)$ with its image in $\Pbf^N_K$.  Similarly, the map $\KK^{N+1}\to\Abf^{N+1}_K$ descends modulo $\sim$ to a map $\PP^N(\KK)\hookrightarrow\Pbf^N_K$ whose image is homeomorphic to the quotient of $\PP^N(\KK)$ by the action of $\Gal(\KK/K)$.

Consider the subset of $\Abf^{N+1}_K$ defined by 
\begin{equation*}
\Sbf_K^N=\{\zeta\in\Abf^{N+1}_K\mid \|\zeta\|= 1\}.
\end{equation*}
Note that $\Sbf_K^N$ is compact, since $\Sbf_K^N=E_1\setminus U_1$ for the compact set $E_1$ and the open set $U_1$ defined in $\S$~\ref{AffineNormSect}.  The following lemma shows that the quotient map $\pi:\Abf^{N+1}_K\setminus\{0\}\to\Pbf^{N}_K$ remains surjective when restricted to $\Sbf_K^N$; we will still use the notation $\pi:\Sbf_K^N\to\Pbf^{N}_K$ to refer to this restricted map.

\begin{lem}\label{RestSurjLem}
Given a point $z\in\Pbf^{N}_K$, there exists a point $\zeta_z\in \Sbf_K^N$ such that $\pi(\zeta_z)=z$.
\end{lem}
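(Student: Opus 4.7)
The plan is to start with any preimage $\zeta \in \Abf^{N+1}_K \setminus \{0\}$ of $z$ under the quotient map $\pi$, and then to rescale it within its $\sim$-equivalence class so as to land in $\Sbf_K^N$. Setting $c := \|\zeta\|$, which is positive since $\zeta$ is not the origin, the goal is to produce $\xi\sim\zeta$ satisfying $[f]_\xi = c^{-\deg f}[f]_\zeta$ for every homogeneous polynomial $f$; this will force $\|\xi\| = c^{-1}\|\zeta\| = 1$. Since $c$ need not lie in the value group $|K^\times|$, no direct substitution $X_i\mapsto c^{-1}X_i$ is available inside $K[X]$, so I will instead use a Gauss-type extension with an auxiliary variable.

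Concretely, extend $[\cdot]_\zeta$ to a multiplicative seminorm $[\cdot]_{\tilde\zeta}$ on $K[X,T]$ by the formula
\begin{equation*}
\Bigl[\sum_n a_n(X)\,T^n\Bigr]_{\tilde\zeta} := \max_n [a_n]_\zeta\, c^{-n},
\end{equation*}
so that $[T]_{\tilde\zeta}=c^{-1}$, and then define $\xi$ as the pullback of $[\cdot]_{\tilde\zeta}$ along the $K$-algebra homomorphism $K[X]\to K[X,T]$ sending each $X_i$ to $TX_i$; explicitly,
\begin{equation*}
[f]_\xi := [f(TX_0,TX_1,\dots,TX_N)]_{\tilde\zeta}.
\end{equation*}
As the composition of a ring homomorphism with a multiplicative seminorm extending $|\cdot|$, the function $[\cdot]_\xi$ is itself a multiplicative seminorm on $K[X]$ extending $|\cdot|$, hence defines a point of $\Abf^{N+1}_K$. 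A direct computation gives $[X_i]_\xi = c^{-1}[X_i]_\zeta$, whence $\|\xi\|=1$, while for $f$ homogeneous of degree $d$ one has $f(TX)=T^d f(X)$, yielding $[f]_\xi = c^{-d}[f]_\zeta$. Thus $\xi\sim\zeta$, so $\pi(\xi)=z$ and $\xi\in\Sbf_K^N$, as required.

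The one step that is not purely formal is the multiplicativity of the Gauss extension $[\cdot]_{\tilde\zeta}$ on $K[X,T]$. The inequalities $[g+h]_{\tilde\zeta}\leq\max([g]_{\tilde\zeta},[h]_{\tilde\zeta})$ and $[gh]_{\tilde\zeta}\leq [g]_{\tilde\zeta}[h]_{\tilde\zeta}$ are routine ultrametric consequences, but the reverse inequality $[gh]_{\tilde\zeta}\geq [g]_{\tilde\zeta}[h]_{\tilde\zeta}$ requires the standard strict-ultrametric argument: writing $g=\sum_m a_m T^m$ and $h=\sum_n b_n T^n$ and letting $M$ and $N$ be the largest indices achieving the maxima in $[g]_{\tilde\zeta}$ and $[h]_{\tilde\zeta}$ respectively, in the coefficient $\sum_{m+n=M+N} a_m b_n$ of $T^{M+N}$ in $gh$ the term $a_M b_N$ strictly dominates the other summands under $[\cdot]_\zeta$, so strict ultrametricity forces that coefficient to have $[\cdot]_\zeta$-seminorm exactly $[a_M]_\zeta[b_N]_\zeta$, and therefore $[gh]_{\tilde\zeta}\geq c^{M+N}[a_M]_\zeta[b_N]_\zeta = [g]_{\tilde\zeta}[h]_{\tilde\zeta}$.
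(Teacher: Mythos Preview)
Your proof is correct, but it takes a different route from the paper's. The paper observes that $\|\zeta\|=[X_n]_\zeta$ for some coordinate index $n$ (say $n=N$), and then rescales \emph{inside} the given seminorm by dehomogenizing with respect to $X_N$: for $f\in K[X]$ one sets
\[
[f]_{\zeta_z}:=[X_N]_\zeta^{-\ell}\,[X_N^\ell f(X_0/X_N,\dots,X_{N-1}/X_N,1)]_\zeta
\]
for $\ell$ large enough that the bracketed expression is a polynomial. This avoids the difficulty you flag (that $c=\|\zeta\|$ need not lie in $|K^\times|$) by exploiting that $c$ is already a value of $[\cdot]_\zeta$ on a coordinate, so no auxiliary variable or Gauss extension is needed. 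Your approach, by contrast, adjoins $T$ with $[T]=c^{-1}$ and pulls back along $X_i\mapsto TX_i$; this is more symmetric in the coordinates and would work for any positive real $c$, at the price of having to verify multiplicativity of the Gauss extension from scratch. Both arguments yield the same $\sim$-equivalent normalized lift.

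Two small points: you reuse $N$ for an index in the Gauss-lemma paragraph, clashing with the ambient dimension; and in the last displayed inequality the exponent should be $c^{-(M+N)}$ rather than $c^{M+N}$.
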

\begin{proof}
Let $\zeta$ be an arbitrary point in $\Abf^{N+1}_K\setminus\{0\}$ such that $\pi(\zeta)=z$.  By symmetry, we may assume without loss of generality that $X_N$ is the coordinate at which the maximum $\|\zeta\|=[X_N]_\zeta$ is attained; thus $[X_N]_\zeta\neq0$.  Given $f\in K[X]$, observe that $X_N^\ell f(X_0/X_N, X_1/X_N, \dots,X_{N-1}/X_N,1)$ is a polynomial for all sufficiently large integers $\ell\geq0$.  Given such an integer $\ell$, define
\begin{equation*}
[f]_{\zeta_z} := [X_N]^{-\ell}_\zeta[X_N^\ell f(X_0/X_N, X_1/X_N, \dots,X_{N-1}/X_N,1)]_\zeta.
\end{equation*}
The value of $[f]_{\zeta_z}$ does not depend on $\ell$ since $[\cdot]_\zeta$ is multiplicative.  Moreover, $[\cdot]_{\zeta_z}$ inherits the axioms of a multiplicative seminorm from $[\cdot]_\zeta$, and if $f$ is homogeneous then $[f]_\zeta=[X_N]_\zeta^{\deg(f)}[f]_{\zeta_z}$; therefore $[\cdot]_{\zeta_z}$ defines an element $\zeta_z\in\Abf^{N+1}_K\setminus\{0\}$ with $\pi(\zeta_z)=\pi(\zeta)=z$.  Finally, we have $[X_N]_{\zeta_z}=[1]_\zeta=1$ and $[X_n]_{\zeta_z}=[X_N]_\zeta^{-1}[X_{n}]_\zeta\leq1$ for all $0\leq n\leq N-1$, whereby $\|\zeta_z\|=1$, and thus $\zeta_z\in  \Sbf_K^N$ as desired.
\end{proof}

\begin{prop}\label{ProjCompactProp}
$\Pbf^{N}_K$ is a compact Hausdorff space.
\end{prop}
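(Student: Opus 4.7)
The plan is to get compactness and Hausdorffness separately, exploiting Lemma~\ref{RestSurjLem} for the former and constructing continuous separating functions descended from $\Abf^{N+1}_K$ for the latter.

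Compactness is essentially free at this point. The set $\Sbf^N_K = E_1 \setminus U_1$ was noted to be compact (closed subset of the compact $E_1$). The quotient map $\pi: \Abf^{N+1}_K \setminus\{0\} \to \Pbf^N_K$ is continuous by the definition of the quotient topology, so its restriction to $\Sbf^N_K$ is continuous as well. By Lemma~\ref{RestSurjLem} this restricted map is surjective, and a continuous image of a compact space is compact. Hence $\Pbf^N_K$ is compact.

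For the Hausdorff property, the key idea is that every homogeneous polynomial $f \in K[X]$ of degree $d$ gives rise to a well-defined continuous function
\begin{equation*}
\Phi_f : \Pbf^N_K \to \RR, \qquad \Phi_f(\pi(\zeta)) = \frac{[f]_\zeta}{\|\zeta\|^d}.
\end{equation*}
Well-definedness follows because $X_0,\dots,X_N$ are themselves homogeneous of degree $1$, so if $\zeta \sim \xi$ with scaling constant $\lambda>0$, then $\|\zeta\| = \lambda\|\xi\|$ and $[f]_\zeta = \lambda^d [f]_\xi$, and the ratios agree. The composition $\Phi_f \circ \pi$ is continuous on $\Abf^{N+1}_K \setminus \{0\}$ since $\|\cdot\|$ is continuous and nonvanishing there and each $\zeta \mapsto [f]_\zeta$ is continuous; by the universal property of the quotient topology, $\Phi_f$ is then continuous on $\Pbf^N_K$.

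Now given distinct points $z,z' \in \Pbf^N_K$, choose representatives $\zeta_z,\zeta_{z'} \in \Sbf^N_K$ via Lemma~\ref{RestSurjLem}. If $[f]_{\zeta_z} = [f]_{\zeta_{z'}}$ for every homogeneous $f$, then $\zeta_z \sim \zeta_{z'}$ with $\lambda=1$, forcing $z=z'$, a contradiction. So some homogeneous $f$ of degree $d$ has $[f]_{\zeta_z} \neq [f]_{\zeta_{z'}}$, and since $\|\zeta_z\|=\|\zeta_{z'}\|=1$ this gives $\Phi_f(z) \neq \Phi_f(z')$. Separating these two real values by disjoint open intervals and pulling back via $\Phi_f$ yields disjoint open neighborhoods of $z$ and $z'$.

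The main subtlety is verifying that $\Phi_f$ is well-defined and continuous on the quotient, rather than merely defined on $\Abf^{N+1}_K \setminus\{0\}$; once this is in place, both assertions follow cleanly from Lemma~\ref{RestSurjLem} and the compactness of $\Sbf^N_K$.
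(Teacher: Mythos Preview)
Your proof is correct, and the compactness half matches the paper's exactly.  For the Hausdorff property you take a genuinely different route: you construct explicit continuous separating functions $\Phi_f$ on the quotient and pull back disjoint real intervals, whereas the paper invokes a Bourbaki criterion (a surjective quotient map from a compact Hausdorff space $S$ to $S'$ gives Hausdorff $S'$ iff the relation $\{(x,y)\mid f(x)=f(y)\}$ is closed in $S\times S$) and then checks via a net argument that $R=\{(\zeta,\xi)\in\Sbf^N_K\times\Sbf^N_K\mid\zeta\sim\xi\}$ is closed.  Your argument is more elementary and self-contained, and it has the pleasant side effect of anticipating the functions $\lambda_f=\Phi_f$ that the paper introduces in $\S$~\ref{MainThmSect} and uses (via Stone--Weierstrass) to prove the main weak-convergence criterion; indeed your point-separation step is essentially the same computation that appears there.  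The paper's closed-relation approach, by contrast, is less tied to the specific seminorm structure and would port more directly to other Berkovich quotient constructions, at the cost of requiring the Bourbaki lemma and a slightly fiddly limit argument for the scaling constants $\lambda_\alpha$.
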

\begin{proof}
The following is a standard result of general topology (\cite{Bourbaki} $\S$~10.2): if $S$ is a compact Hausdorff space and $f:S\to S'$ is a surjective quotient map onto a topological space $S'$, then $S'$ is Hausdorff if and only if the set $\{(x,y) \mid f(x)=f(y)\}$ is closed in $S\times S$.  

Applying this result to the map $\pi: \Sbf_K^N\to\Pbf^{N}_K$, in order to show that $\Pbf^{N}_K$ is Hausdorff it suffices to show that the set $R=\{(\zeta,\xi) \in  \Sbf_K^N\times  \Sbf_K^N\mid \zeta\sim \xi\}$ is closed in $ \Sbf_K^N\times  \Sbf_K^N$.  To show that $R$ is closed, consider a convergent net $\langle(\zeta_\alpha,\xi_\alpha)\rangle$ in $ \Sbf_K^N\times  \Sbf_K^N$ with $\zeta_\alpha\sim \xi_\alpha$ for all $\alpha\in A$, and with $(\zeta_\alpha,\xi_\alpha)\to(\zeta,\xi)\in  \Sbf_K^N\times  \Sbf_K^N$; we must show that $\zeta\sim \xi$.  By the definition of $\sim$, there exists a net of positive real numbers $\langle\lambda_\alpha\rangle$ such that $[f]_{\zeta_\alpha}=\lambda_\alpha^{\deg(f)}[f]_{\xi_\alpha}$ for all homogeneous $f\in K[X]$ and all $\alpha\in A$.  Since $\zeta_\alpha\to \zeta$ and $\xi_\alpha \to \xi$, and since the maps $\zeta\mapsto[f]_\zeta$ are continuous, we deduce that $[f]_{\zeta_\alpha}\to [f]_\zeta$ and $[f]_{\xi_\alpha}\to [f]_\xi$ for all $f\in K[X]$.  It follows that the net $\langle\lambda_\alpha\rangle$ converges to the number $\lambda:=([f]_\zeta/[f]_\xi)^{1/\deg(f)}$ for all homogeneous $f\in K[X]$.  Since $\RR$ is Hausdorff, the limit $\lambda$ is unique and therefore independent of $f$.  Since $[f]_{\zeta}=\lambda^{\deg(f)}[f]_{\xi}$, we deduce that $\zeta\sim \xi$.  This concludes the proof that $R$ is closed in $ \Sbf_K^N\times  \Sbf_K^N$, and therefore that $\Pbf^{N}_K$ is Hausdorff.  Since $ \Sbf_K^N$ is compact and $\pi: \Sbf_K^N\to\Pbf^{N}_K$ is continuous and surjective, $\Pbf^{N}_K$ must also be compact.
\end{proof}

\subsection{}  For background on Berkovich analytic spaces, see Berkovich's original monograph \cite{Berkovich}, especially $\S$~1.5 for a discussion of affine space $\Abf_K^{N+1}$.  The construction of $\Pbf_K^N$ via the equivalence relation $\sim$ on $\Abf_K^{N+1}$, which is similar to the scheme-theoretic $\Proj$ construction, is due to Berkovich himself \cite{Berkovich1}.  Baker-Rumely (\cite{BakerRumelyBook} $\S$~2.2) have treated the case $N=1$ at length, but for general $N\geq1$ the construction and basic topological properties of $\Pbf_K^N$ do not seem to have been written out in detail before now.  

The fundamental compactness argument, used here in the proof of Proposition~\ref{AffineLocComp}, is due to Baker-Rumely (\cite{BakerRumelyBook} Thm. C.3); it is slightly different than Berkovich's original argument (\cite{Berkovich} Thm. 1.2.1).  Naturally, both proofs use Tychonoff's theorem.

\section{A Criterion for Weak Convergence}\label{MainThmSect}

\subsection{}  Let $\Ccal(\Pbf^N_K)$ denote the space of continuous functions $\Pbf^N_K\to\RR$.  Thus $\Ccal(\Pbf^N_K)$ is a Banach algebra (with respect to the supremum norm).  By a Borel measure $\mu$ on $\Pbf^N_K$ we mean a positive measure on the Borel $\sigma$-algebra of $\Pbf^N_K$; we say $\mu$ is a unit Borel measure if $\mu(\Pbf^N_K)=1$.  Given a net $\langle\mu_\alpha\rangle$ of Borel measures on $\Pbf^N_K$, and and another Borel measure $\mu$ on $\Pbf^N_K$, we say that $\mu_\alpha\to\mu$ weakly if $\int \varphi d\mu_\alpha\to\int \varphi d\mu$ for all $\varphi\in\Ccal(\Pbf^N_K)$.  

We will now state and prove the main result of this paper.  Given a homogeneous polynomial $f\in K[X]$, it follows from the definition of the equivalence relation $\sim$ that the real-valued function $\zeta\mapsto[f]_\zeta/\|\zeta\|^{\deg(f)}$ on $\Abf_K^{N+1}$ is constant on $\sim$-equivalence classes.  We may therefore define the function
\begin{equation*}
\lambda_f:\Pbf^N_K\to\RR \hskip1cm \lambda_f(\pi(\zeta))=\frac{[f]_\zeta}{\|\zeta\|^{\deg(f)}}.
\end{equation*}

\begin{thm}\label{MainTheorem}
Let $\langle\mu_\alpha\rangle$ be a net of unit Borel measures on $\Pbf^N_K$, and let $\mu$ be another unit Borel measure on $\Pbf^N_K$.  Then $\mu_\alpha\to\mu$ weakly if and only if $\int\lambda_f d\mu_\alpha\to\int\lambda_f d\mu$ for all normalized homogeneous polynomials $f\in K[X]$.
\end{thm}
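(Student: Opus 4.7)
The ``only if'' direction is immediate once one observes that each $\lambda_f$ is continuous on $\Pbf_K^N$: on $\Abf_K^{N+1}\setminus\{0\}$ the function $\zeta\mapsto[f]_\zeta/\|\zeta\|^{\deg f}$ is continuous (a ratio of continuous functions with non-vanishing denominator) and constant on $\sim$-classes, so it descends to a continuous function on the quotient $\Pbf_K^N$. The plan for the nontrivial direction is to invoke the Stone--Weierstrass theorem on the compact Hausdorff space $\Pbf_K^N$ (Proposition~\ref{ProjCompactProp}) applied to the real subalgebra $\Acal\subseteq\Ccal(\Pbf_K^N)$ generated by the family
\[
\Fcal=\{\lambda_f\mid f\in K[X]\ \text{normalized homogeneous}\},
\]
and then to approximate arbitrary $\varphi\in\Ccal(\Pbf_K^N)$ uniformly by elements of $\Acal$, for which the convergence hypothesis applies.

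To verify the Stone--Weierstrass hypotheses, I would first note that $\Acal$ contains the constants, since the constant polynomial $1$ is normalized and homogeneous of degree $0$ and gives $\lambda_1\equiv1$. For separation of points, given distinct $z,z'\in\Pbf_K^N$, I would use Lemma~\ref{RestSurjLem} to choose lifts $\zeta,\zeta'\in\Sbf_K^N$. Since $\|\zeta\|=\|\zeta'\|=1$, applying the relation $\zeta\sim\zeta'$ to the coordinate forms $X_0,\dots,X_N$ and taking the maximum forces the scaling constant in the definition of $\sim$ to equal $1$; hence $\zeta\not\sim\zeta'$ produces a homogeneous $g\in K[X]$ with $[g]_\zeta\neq[g]_{\zeta'}$. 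Rescaling by a nonzero coefficient $c$ of $g$ with $|c|=H(g)$ yields a normalized homogeneous polynomial $f=c^{-1}g$ for which $\lambda_f(z)=[f]_\zeta\neq[f]_{\zeta'}=\lambda_f(z')$.

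The key algebraic step is to identify the members of $\Acal$ explicitly. Multiplicativity of the seminorms and additivity of the degree give $\lambda_f\lambda_g=\lambda_{fg}$ for any homogeneous $f,g$, and the non-archimedean Gauss lemma $H(fg)=H(f)H(g)$ shows that the product of normalized homogeneous polynomials is again normalized homogeneous. Consequently $\Fcal$ is closed under multiplication, and every $P\in\Acal$ can be written as a finite real linear combination
\[
P=c_0+\sum_{i=1}^{k}c_i\,\lambda_{g_i}
\]
with $g_i$ normalized homogeneous. By linearity, the hypothesis $\int\lambda_{g_i}d\mu_\alpha\to\int\lambda_{g_i}d\mu$ together with $\mu_\alpha(\Pbf_K^N)=\mu(\Pbf_K^N)=1$ gives $\int P\,d\mu_\alpha\to\int P\,d\mu$ for every such $P$.

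To finish, Stone--Weierstrass yields uniform density of $\Acal$ in $\Ccal(\Pbf_K^N)$; given $\varphi\in\Ccal(\Pbf_K^N)$ and $\epsilon>0$, choosing $P\in\Acal$ with $\|\varphi-P\|_\infty<\epsilon$ and using $\mu_\alpha(\Pbf_K^N)=\mu(\Pbf_K^N)=1$ in the standard three-$\epsilon$ estimate
\[
\Bigl|\int\varphi\,d\mu_\alpha-\int\varphi\,d\mu\Bigr|\leq2\epsilon+\Bigl|\int P\,d\mu_\alpha-\int P\,d\mu\Bigr|
\]
upgrades convergence on $\Acal$ to convergence on all of $\Ccal(\Pbf_K^N)$. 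I expect the main subtlety to lie in the point-separation step, specifically in working through the equivalence relation $\sim$ on the distinguished slice $\Sbf_K^N$ and confirming via Gauss's lemma that $\Fcal$ is multiplicatively closed; the rest of the argument is a routine application of Stone--Weierstrass and approximation.
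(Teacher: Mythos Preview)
Your proof is correct and follows essentially the same Stone--Weierstrass strategy as the paper. The only minor difference is that the paper avoids invoking Gauss's lemma (which appears later, as Lemma~\ref{GLHomogeneous}) by first extending the convergence hypothesis from normalized to arbitrary nonzero homogeneous $f$ via the identity $\lambda_{cf}=|c|\lambda_f$, so that closure of the generating family under multiplication becomes automatic.
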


\begin{proof}
The ``only if'' direction is trivial since each function $\lambda_f$ is continuous.

To prove the ``if'' direction, assume that $\int\lambda_f d\mu_\alpha\to\int\lambda_f d\mu$ for all normalized homogeneous polynomials $f\in K[X]$.  Then in fact this limit must hold for arbitrary nonzero homogeneous $f\in K[X]$, which is easy to see by scaling $f$ and using the fact that $\lambda_{cf}=|c|\lambda_f$ for all nonzero $c\in K$.

Denote by $\Acal(\Pbf^N_K)$ the subspace of $\Ccal(\Pbf^N_K)$ generated over $\RR$ by the functions of the form $\lambda_f:\Pbf^N_K\to\RR$ for homogeneous $f\in K[X]$.  Then $\Acal(\Pbf^N_K)$ is a dense subalgebra of $\Ccal(\Pbf^N_K)$.  To see this, note that $\Acal(\Pbf^N_K)$ is closed under multiplication, since $\lambda_f\lambda_g=\lambda_{fg}$, and it is therefore a subalgebra. In order to show that $\Acal(\Pbf^N_K)$ is dense in $\Ccal(\Pbf^N_K)$, it suffices by the Stone-Weierstrass theorem (\cite{Folland} $\S$~4.7) to show that $\Acal(\Pbf^N_K)$ separates the points of $\Pbf^N_K$.  Consider two points $z,w\in\Pbf^{N}_K$ such that $\lambda_f(z)=\lambda_f(w)$ for all homogeneous $f\in K[X]$.  Taking $\zeta\in\pi^{-1}(z)$ and $\xi\in\pi^{-1}(w)$, we have $[f]_\zeta=(\|\zeta\|/\|\xi\|)^{\deg(f)}[f]_\xi$ for all homogeneous $f\in K[X]$, which means that $\zeta\sim \xi$, whereby $z=w$.  In other words, $z\neq w$ implies that  $\lambda_f(z)\neq\lambda_f(w)$ for some homogeneous $f\in K[X]$, showing that $\Acal(\Pbf^N_K)$ separates the points of $\Pbf^N_K$, and completing the proof that $\Acal(\Pbf^N_K)$ is dense in $\Ccal(\Pbf^N_K)$.

To show that $\int\varphi d\mu_\alpha \to \int\varphi d\mu$ for any $\varphi\in\Ccal(\Pbf^N_K)$, it suffices by a standard approximation argument  verify it for $\varphi$ in a dense subspace of $\Ccal(\Pbf^N_K)$.  By linearity and what we have already shown, one only needs to check it when $\varphi=\lambda_f$ for an arbitrary normalized homogeneous polynomial $f\in K[X]$, which holds by hypothesis.
\end{proof}

\noindent
{\em Remark.}  The main ingredient in the proof of Theorem~\ref{MainTheorem}, namely the density of the subalgebra $\Acal(\Pbf^N_K)$ in $\Ccal(\Pbf^N_K)$, is similar in spirit to a density result of Gubler (\cite{GublerCrelle} Thm.~7.12).  Specifically, working over an arbitrary compact Berkovich analytic space $X$, Gubler considers the space of functions $z\mapsto-\log\|1(z)\|^{1/m}$, for integers $m\geq1$ and a certain class of algebraic metrics $\|\cdot\|$ defined on the trivial line bundle $O_{X}$.  After showing that the space of all such functions is point-separating and closed under taking maximums and minimums, he appeals to the lattice form of the Stone-Weierstrass theorem to show that this space is dense in the space $\Ccal(X)$ of continuous functions.  

Working only over $\Pbf^N_K$, our density result is rather simpler than Gubler's.  Given a normalized homogeneous polynomial $f\in K[X]$ of degree $d$, we may view it as a section $f\in\Gamma(\PP^N,\Ocal(d))$, and we may therefore write $\lambda_f(z)=\|f(z)\|_\sup$ where $\|\cdot\|_{\sup}$ is the sup-metric on $\Ocal(d)$.   Taking advantage of the identity $\lambda_f\lambda_g=\lambda_{fg}$, we use the multiplicative form of the Stone-Weierstrass theorem to obtain the density of the algebra $\Acal(\Pbf^N_K)$ generated by the functions $\lambda_f$.

\subsection{}  Theorem~\ref{MainTheorem} was stated in terms of the weak convergence of nets of arbitrary unit Borel measures on $\Pbf^N_K$, but our principal concern is with the more specific notion of equidistribution.  Given a finite multiset $Z$ of points in $\Pbf^N_K$, define a unit Borel measure $\delta_{Z}$ on $\Pbf^N_K$ by
\begin{equation*}
\delta_{Z} = \frac{1}{|Z|}\sum_{z\in Z}\delta_z.
\end{equation*}
Here $\delta_z$ is the unit Dirac measure at $z$, characterized by the formula $\int\varphi d\delta_z=\varphi(z)$ for all $\varphi\in \Ccal(\Pbf^N_K)$.  Since $Z$ is a multiset, we understand the cardinality $|Z|$ and the sum over $z\in Z$ to be computed according to multiplicity.  Given a net $\langle Z_\alpha\rangle$ of finite multisets in $\Pbf^N_K$, and a unit Borel measure on $\Pbf^N_K$, we say that the net $\langle Z_\alpha\rangle$ is $\mu$-equidistributed if $\delta_{Z_\alpha}\to\mu$ weakly.

\begin{cor}\label{MainCor}
Let $\langle Z_\alpha\rangle$ be a net of finite multisets in $\Pbf^N_K$, and let $\mu$ be a unit Borel measure on $\Pbf^N_K$.  Then $\langle Z_\alpha\rangle$ is $\mu$-equidistributed if and only if $\frac{1}{|Z_\alpha|}\sum_{z\in Z_\alpha}\lambda_f(z)\to\int\lambda_f d\mu$ for all normalized homogeneous polynomials $f\in K[X]$.
\end{cor}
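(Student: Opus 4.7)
The plan is to deduce this corollary as an immediate specialization of Theorem~\ref{MainTheorem} applied to the net of measures $\mu_\alpha := \delta_{Z_\alpha}$. First I would observe that each $\delta_{Z_\alpha}$ is a unit Borel measure on $\Pbf^N_K$: it is a convex combination of unit Dirac measures, and its total mass is $\frac{1}{|Z_\alpha|}\cdot|Z_\alpha|=1$, where the cardinality in the denominator matches the number of (multiplicity-counted) summands in the numerator. By the definition of $\mu$-equidistribution given just above the statement, the net $\langle Z_\alpha\rangle$ is $\mu$-equidistributed precisely when $\mu_\alpha=\delta_{Z_\alpha}\to\mu$ weakly, so Theorem~\ref{MainTheorem} is directly applicable.

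Next, I would rewrite the integral $\int\lambda_f\,d\mu_\alpha$ using the defining property $\int\varphi\,d\delta_z=\varphi(z)$ of the Dirac measure together with linearity of integration, obtaining
\begin{equation*}
\int\lambda_f\,d\delta_{Z_\alpha}=\frac{1}{|Z_\alpha|}\sum_{z\in Z_\alpha}\int\lambda_f\,d\delta_z=\frac{1}{|Z_\alpha|}\sum_{z\in Z_\alpha}\lambda_f(z),
\end{equation*}
and similarly $\int\lambda_f\,d\mu$ is unchanged on the right-hand side. Thus the hypothesis of Theorem~\ref{MainTheorem} for $\langle\mu_\alpha\rangle$ translates verbatim into the Cesàro-type condition in the statement of the corollary. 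There is no real obstacle here; the only subtlety worth verifying explicitly is that $\lambda_f$ is Borel-measurable and bounded on the compact Hausdorff space $\Pbf^N_K$, both of which follow from its continuity (established in Theorem~\ref{MainTheorem}) so that the integrals are all finite and well-defined. The equivalence then follows immediately in both directions from Theorem~\ref{MainTheorem}.
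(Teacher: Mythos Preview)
Your proposal is correct and takes essentially the same approach as the paper: the corollary is stated immediately after Theorem~\ref{MainTheorem} without a separate proof, since it follows at once by setting $\mu_\alpha=\delta_{Z_\alpha}$ and unwinding the definition of $\delta_{Z_\alpha}$, exactly as you do.
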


\section{Equidistribution and Reduction}\label{ReductionSect}

\subsection{}  Let $r:\PP^{N}(K)\to\PP^N(\tilde{K})$ be the usual reduction map on the ordinary projective space; thus $r(a_0:a_1:\dots:a_N)=(\tilde{a}_0:\tilde{a}_1:\dots:\tilde{a}_N)$, where homogeneous coordinates have been chosen so that $\max\{|a_0|,|a_1|,\dots,|a_N|\}=1$.  In $\S$~\ref{ReductionDefSect} we will discuss how this map extends naturally to a reduction map $r:\Pbf^{N}_K\to\PP^{N}_{\tilde{K}}$ from the Berkovich projective space $\Pbf^{N}_K$ onto the scheme-theoretic projective space $\PP^{N}_{\tilde{K}}$ over the residue field $\tilde{K}$.

Recall that $\zeta_{0,1}$ denotes the point of $\Abf_K^{N+1}$ corresponding to the supremum norm on the polydisc $D(0,1)$ in $\KK^{N+1}$ with center $0=(0,\dots,0)$ and polyradius $1=(1,\dots,1)$, as discussed in $\S$~\ref{AffineSubSect}.  Let $\gamma$ be the point of $\Pbf^N_K$ defined by $\gamma=\pi(\zeta_{0,1})$, and let $\delta_\gamma$ be the unit Dirac measure supported at $\gamma$.  In this section we will give a useful necessary and sufficient condition for $\delta_\gamma$-equidistribution in terms of the functions $\lambda_f$.  In $\S$~\ref{DeltaGammaEquiSect} we will use this result to establish the $\delta_\gamma$-equidistribution of a net $\langle Z_\alpha\rangle$ in $\Pbf^N_K$, provided the image of $\langle Z_\alpha\rangle$ under the reduction map $r:\Pbf^{N}_K\to\PP^{N}_{\tilde{K}}$ satisfies a certain weak Zariski-density property.

We begin with a well-known lemma which records a basic property of the seminorm $[\cdot]_{\zeta_{0,1}}$.  Given a polynomial $f\in K[X]$, recall that $H(f)$ denotes the maximum absolute value of the coefficients of $f$.

\begin{lem}\label{GLHomogeneous}
The identity $[f]_{\zeta_{0,1}}=H(f)$ holds for all $f\in K[X]$.
\end{lem}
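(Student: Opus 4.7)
The plan is to prove the identity by establishing inequalities in both directions, using the definition $[f]_{\zeta_{0,1}} = \sup\{|f(a)| \mid a\in D(0,1)\}$.

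The upper bound $[f]_{\zeta_{0,1}} \leq H(f)$ is a direct consequence of the ultrametric inequality. Writing $f = \sum_I c_I X^I$, for any $a = (a_0,\dots,a_N) \in D(0,1)$ we have $|a^I| \leq 1$, and so
\begin{equation*}
|f(a)| \leq \max_I |c_I||a^I| \leq \max_I |c_I| = H(f).
\end{equation*}
Taking the supremum over $a \in D(0,1)$ gives the desired inequality.

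For the lower bound, the idea is to find a point $a \in D(0,1)$ at which the value $|f(a)| = H(f)$ is attained. First, choose a coefficient $c \in K^\times$ of $f$ with $|c| = H(f)$ and set $g = c^{-1}f$. Then $g \in \KK^\circ[X]$, and at least one coefficient of $g$ has absolute value $1$, so the reduction $\tilde{g} \in \tilde{\KK}[X]$ is a nonzero polynomial. The key point is that $\tilde{\KK}$ is algebraically closed and therefore infinite, so a nonzero polynomial over $\tilde{\KK}$ in $N+1$ variables cannot vanish identically on $\tilde{\KK}^{N+1}$. Hence there exist $\tilde{a}_0,\dots,\tilde{a}_N \in \tilde{\KK}$ such that $\tilde{g}(\tilde{a}) \neq 0$. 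Lifting each $\tilde{a}_n$ to an element $a_n \in \KK^\circ$, we obtain $a \in D(0,1)$ with $\widetilde{g(a)} = \tilde{g}(\tilde{a}) \neq 0$, which forces $|g(a)| = 1$. Consequently $|f(a)| = |c|\,|g(a)| = H(f)$, and the lower bound follows.

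The only subtle point is the appeal to the infinitude of the residue field: if one tried to work directly with $\tilde{K}$ and $K^\circ$ instead of $\tilde{\KK}$ and $\KK^\circ$, the argument would fail in the locally compact case where $\tilde{K}$ is finite, since a nonzero polynomial in several variables over a finite field can vanish on the entire affine space. Working in $\KK$ sidesteps this issue cleanly, and no further obstacles arise.
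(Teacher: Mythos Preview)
Your proof is correct and follows essentially the same route as the paper's: the ultrametric inequality for the upper bound, and for the lower bound a normalization so that the reduction to $\tilde{\KK}[X]$ is nonzero, followed by the observation that a nonzero polynomial over the algebraically closed field $\tilde{\KK}$ has a nonvanishing point which can then be lifted to $D(0,1)$. The only cosmetic difference is that the paper scales $f$ at the outset to assume $H(f)=1$, whereas you carry the scaling factor $c$ through the argument; your closing remark on why one must pass to $\tilde{\KK}$ rather than $\tilde{K}$ is a welcome addition.
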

\begin{proof}
This is trivial if $f=0$, so we may assume $f\neq0$.  Scaling $f$ by an appropriate element of $K$, we may assume without loss of generality that $f$ is normalized, and thus we must show that $[f]_{\zeta_{0,1}}=1$.  Plainly $|f(a)|\leq1$ for all $a\in D(0,1)$ by the ultrametric inequality, whereby $[f]_{\zeta_{0,1}}\leq 1$.  Since $f$ has coefficients in $K^\circ$, and thus in $\KK^\circ$, it reduces to a polynomial $\tilde{f}(X)\in\tilde{\KK}[X]$.  Since $H(f)=1$, the reduced polynomial $\tilde{f}(X)$ is nonzero and therefore is nonvanishing on a nonempty Zariski-open subset of $\tilde{\KK}^{N+1}$ (note that $\tilde{\KK}$ is algebraically closed).  Select some $\tilde{a}_0\in \tilde{\KK}^{N+1}$ such that $\tilde{f}(\tilde{a}_0)\neq0$, and let $a_0\in D(0,1)$ be a point which reduces to $\tilde{a}_0$.  Then $1=|f(a_0)|\leq [f]_{\zeta_{0,1}}$, completing the proof that $[f]_{\zeta_{0,1}}=1$.
\end{proof}

\noindent
{\em Remark.}  It follows from Lemma~\ref{GLHomogeneous} and the multiplicativity of the seminorm $[\cdot]_{\zeta_{0,1}}$ that $H(fg)=H(f)H(g)$ for any two polynomials $f,g\in K[X]$; this fact is essentially equivalent to Gauss's lemma from algebraic number theory.  Consequently, $\zeta_{0,1}$ is commonly called the Gauss point of $\Abf^{N+1}_K$, and likewise $\gamma=\pi(\zeta_{0,1})$ the Gauss point of $\Pbf^{N}_K$.

\begin{thm}\label{DeltaGammaEquiTheorem}
Let $\langle Z_\alpha\rangle$ be a net of nonempty finite multisets in $\Pbf^N_K$.  Then $\langle Z_\alpha\rangle$ is $\delta_{\gamma}$-equidistributed if and only if the limit
\begin{equation}\label{DeltaGammaLimit}
\lim_\alpha \frac{|\{z\in Z_\alpha\mid\lambda_f(z)<t\}|}{|Z_\alpha|}=0
\end{equation}
holds for each normalized homogeneous polynomial $f\in K[X]$ and each real number $0<t<1$.
\end{thm}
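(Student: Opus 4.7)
My plan is to bootstrap from Corollary~\ref{MainCor}: once we translate $\delta_\gamma$-equidistribution into a statement about averages of the functions $\lambda_f$, the result will follow from a purely elementary fact about $[0,1]$-valued functions whose averages tend to $1$. There are three preparatory observations and then one elementary lemma.

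First I would record the key identity $\lambda_f(\gamma)=1$ for every normalized homogeneous $f\in K[X]$. By the definition of $\gamma=\pi(\zeta_{0,1})$ and the fact that $\|\zeta_{0,1}\|=\max_n[X_n]_{\zeta_{0,1}}=1$ (each $X_n$ is itself normalized, so Lemma~\ref{GLHomogeneous} gives $[X_n]_{\zeta_{0,1}}=1$), together with $[f]_{\zeta_{0,1}}=H(f)=1$ from Lemma~\ref{GLHomogeneous}, one reads off $\lambda_f(\gamma)=[f]_{\zeta_{0,1}}/\|\zeta_{0,1}\|^{\deg(f)}=1$. Next, I would check the uniform bound $0\leq\lambda_f(z)\leq 1$ on all of $\Pbf^N_K$: given $z$, pick a representative $\zeta_z\in\Sbf^N_K$ as in Lemma~\ref{RestSurjLem}, so $\|\zeta_z\|=1$ and $\lambda_f(z)=[f]_{\zeta_z}$; writing $f=\sum c_I X^I$ and using the ultrametric property and multiplicativity of $[\cdot]_{\zeta_z}$ gives $[f]_{\zeta_z}\leq\max_I|c_I|\prod_n[X_n]_{\zeta_z}^{I_n}\leq H(f)\cdot\|\zeta_z\|^{\deg(f)}=1$.

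With these in hand, Corollary~\ref{MainCor} applied to $\mu=\delta_\gamma$ says that $\langle Z_\alpha\rangle$ is $\delta_\gamma$-equidistributed if and only if
\begin{equation*}
\frac{1}{|Z_\alpha|}\sum_{z\in Z_\alpha}\lambda_f(z)\;\longrightarrow\;\int\lambda_f\,d\delta_\gamma=\lambda_f(\gamma)=1
\end{equation*}
for every normalized homogeneous $f\in K[X]$. So the theorem reduces to showing that, for a net of $[0,1]$-valued functions $g_\alpha$ on finite multisets $Z_\alpha$, one has $|Z_\alpha|^{-1}\sum_{z\in Z_\alpha}g_\alpha(z)\to 1$ if and only if $|Z_\alpha|^{-1}|\{z\in Z_\alpha\mid g_\alpha(z)<t\}|\to 0$ for every $0<t<1$.

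Both directions of this elementary lemma follow by splitting the sum at the threshold $t$. For the forward direction, bounding $g_\alpha$ by $t$ on $\{g_\alpha<t\}$ and by $1$ elsewhere yields $|Z_\alpha|^{-1}\sum g_\alpha\leq 1-(1-t)\cdot|Z_\alpha|^{-1}|\{g_\alpha<t\}|$, so the convergence of the left-hand side to $1$ forces the proportion of small values to $0$. For the converse, given $\epsilon>0$ choose $t=1-\epsilon$; the trivial lower bound $|Z_\alpha|^{-1}\sum g_\alpha\geq t\cdot(1-|Z_\alpha|^{-1}|\{g_\alpha<t\}|)$ shows the average eventually exceeds $1-2\epsilon$, while the upper bound of $1$ is automatic. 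Applying this with $g_\alpha(z)=\lambda_f(z)$ finishes the proof. There is no real obstacle here; the only thing worth being careful about is checking $\lambda_f(\gamma)=1$ and $\lambda_f\leq 1$ globally, since without the uniform upper bound the elementary lemma would fail and the equivalence would have to be formulated differently.
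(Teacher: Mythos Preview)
Your proposal is correct and follows essentially the same approach as the paper: reduce via Corollary~\ref{MainCor} to the statement $S_\alpha(f)\to 1$, establish $\lambda_f(\gamma)=1$ and $0\leq\lambda_f\leq 1$, and then prove the elementary equivalence by splitting at the threshold $t$. The only cosmetic difference is that in the ``only if'' direction the paper argues by contrapositive (passing to a subnet along which the $\limsup$ is a limit), whereas your upper-bound inequality $|Z_\alpha|^{-1}\sum\lambda_f\leq 1-(1-t)\,|Z_\alpha|^{-1}|\{\lambda_f<t\}|$ gives a cleaner direct proof.
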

\begin{proof}
Given a normalized homogeneous polynomial $f\in K[X]$, we have $0\leq \lambda_f(z)\leq1$ for all $z\in\Pbf^{N}_K$ (the upper bound following from the ultrametric inequality), and $\lambda_f(\gamma)=[f]_{\zeta_{0,1}}/\|\zeta_{0,1}\|^{\deg(f)}=1$ (by Lemma~\ref{GLHomogeneous}).  For each multiset $Z_\alpha$, define the sum
\begin{equation*}
S_\alpha(f)=\frac{1}{|Z_\alpha|}\sum_{z\in Z_\alpha}\lambda_f(z).
\end{equation*}
Plainly $0\leq S_\alpha(f)\leq1$, and it follows from the above observations and Corollary~\ref{MainCor} that $\langle Z_\alpha\rangle$ is $\delta_{\gamma}$-equidistributed if and only if $S_\alpha(f)\to1$ for all normalized homogeneous polynomials $f\in K[X]$.  In order to prove the theorem, it therefore suffices to show that $S_\alpha(f)\to1$ if and only if the limit $(\ref{DeltaGammaLimit})$ holds for all $0<t<1$.

Assuming that the limit $(\ref{DeltaGammaLimit})$ holds for all $0<t<1$, we have
\begin{equation*}
S_\alpha(f)\geq t\frac{|\{z\in Z_\alpha\mid\lambda_f(z)\geq t\}|}{|Z_\alpha|} = t\bigg(1-\frac{|\{z\in Z_\alpha\mid\lambda_f(z)<t\}|}{|Z_\alpha|}\bigg)\to t.
\end{equation*}
As $0<t<1$ is arbitrary and $S_\alpha(f)\leq1$, we deduce that $S_\alpha(f)\to1$.

Conversely, suppose that 
\begin{equation}\label{DeltaGammaLimitFail}
\limsup_\alpha \frac{|\{z\in Z_\alpha\mid\lambda_f(z)<t\}|}{|Z_\alpha|}=\epsilon>0.
\end{equation}
for some $0<t<1$.  Passing to a subnet, we may assume that this limsup is actually a limit.  Using the fact that $0\leq \lambda_f(z)\leq1$ for all $z\in\Pbf^{N}_K$, we have
\begin{equation*}
\begin{split}
S_\alpha(f) & = \frac{1}{|Z_\alpha|}\sum_{\stackrel{z\in Z_\alpha}{\lambda_f(z)<t}}\lambda_f(z) + \frac{1}{|Z_\alpha|}\sum_{\stackrel{z\in Z_\alpha}{\lambda_f(z)\geq t}}\lambda_f(z) \\
	& \leq t\frac{|\{z\in Z_\alpha\mid\lambda_f(z)< t\}|}{|Z_\alpha|} + \frac{|\{z\in Z_\alpha\mid\lambda_f(z)\geq t\}|}{|Z_\alpha|} \\
	& \to t\epsilon + (1-\epsilon)<1,
\end{split}
\end{equation*}
which means that $S_\alpha(f)\not\to1$ in this case.
\end{proof}

\subsection{}\label{ReductionDefSect}  Let $\AA^{N+1}_{\tilde{K}}=\Spec(\tilde{K}[X])$ and $\PP^{N}_{\tilde{K}}=\Proj(\tilde{K}[X])$ be the usual scheme-theoretic affine and projective spaces over the residue field $\tilde{K}$, as defined say in \cite{Hartshorne} $\S$~II.2.  Let $\pi_\sch:\AA^{N+1}_{\tilde{K}}\setminus\{\fp_0\}\to\PP^{N}_{\tilde{K}}$ denote the standard projection map, where $\fp_0$ denotes the ideal of polynomials in $\tilde{K}[X]$ vanishing at $0=(0,0,\dots,0)$.

Recall from $\S$~\ref{BerkProjSect} that the set $ \Sbf_K^N=\{\zeta\in\Abf^{N+1}_K\mid \|\zeta\|= 1\}$ can be taken as a domain for the quotient map $\pi: \Sbf_K^N\to\Pbf^{N}_K$.  Given a point $\zeta\in  \Sbf_K^N$, the ultrametric inequality implies that $[f]_\zeta\leq 1$ for all $f\in K^\circ[X]$.  Define
\begin{equation*}
\wp_\zeta = \{f\in K^\circ[X] \mid [f]_\zeta<1\},
\end{equation*}
and define $\tilde{\wp}_\zeta$ to be the image of $\wp_\zeta$ under the reduction map $K^\circ[X]\to\tilde{K}[X]$.  Then $\wp_\zeta$ is a prime ideal of the ring $K^\circ[X]$ (since $[\cdot]_\zeta$ is multiplicative), and $\tilde{\wp}_\zeta$ is a prime ideal of the ring $\tilde{K}[X]$ (since $\tilde{K}[X]\simeq K^\circ[X]/K^{\circ\circ}$).  We obtain affine and projective reduction maps
\begin{equation}\label{RedMaps}
\begin{CD}
 \Sbf_K^N  @>  >>   \AA^{N+1}_{\tilde{K}}\setminus \{\fp_0\}  \\ 
@V \pi VV                                    @VV \pi_{\sch} V \\ 
\Pbf^{N}_K        @> r >>      \PP^{N}_{\tilde{K}}
\end{CD}
\end{equation}
Here the affine reduction map $\Sbf_K^N  \to \AA^{N+1}_{\tilde{K}}\setminus \{\fp_0\}$ is given by $\zeta\mapsto\tilde{\wp}_\zeta$, and to see that there exists a unique map $r:\Pbf^{N}_K\to\PP^{N}_{\tilde{K}}$ completing the commutative diagram $(\ref{RedMaps})$, it suffices to show that $\pi_\sch(\tilde{\wp}_\zeta)=\pi_\sch(\tilde{\wp}_\xi)$ whenever $\pi(\zeta)=\pi(\xi)$; this is straightforward to check using the definitions of $\pi$ and $\pi_\sch$.  It is also a standard exercise in the definitions to show that the reduction map $r$ is surjective (see \cite{Berkovich} $\S$~2.4).

The Gauss point $\gamma=\pi(\zeta_{0,1})$ can be characterized as the unique point of $\Pbf^{N}_K$ which reduces to the generic point of $\PP^{N}_{\tilde{K}}$.  To see this, note that $\tilde{\wp}_{\zeta_{0,1}}$ is the zero ideal of $\tilde{K}[X]$ by Lemma~\ref{GLHomogeneous}; thus $r(\gamma)=\pi_\sch(\tilde{\wp}_{\zeta_{0,1}})$ is the generic point of $\PP^{N}_{\tilde{K}}$.  Conversely, if $\zeta\in \Sbf_K^N$ and $\pi_\sch(\tilde{\wp}_{\zeta})$ is the generic point of $\PP^{N}_{\tilde{K}}$, then the ideal $\tilde{\wp}_{\zeta}$ contains no nonzero homogeneous polynomials in $\tilde{K}[X]$.  In other words, $\lambda_f(\pi(\zeta))=[f]_\zeta=1$ for all normalized homogeneous polynomials $f\in K[X]$.  Since $\lambda_f(\gamma)=1$ for all such $f$ as well, and since the functions $\lambda_f$ separate the points of $\Pbf^{N}_K$, we conclude that $\pi(\zeta)=\gamma$.

\subsection{}\label{DeltaGammaEquiSect}  Given a finite multiset $Z$ of points in $\Pbf^N_K$, define its reduction $\tilde{Z}$ to be the finite multiset in $\PP^N_{\tilde{K}}$ where the multiplicity of a point $\tilde{z}$ in $\tilde{Z}$ is the sum of the multiplicities of the points $z\in r^{-1}(\tilde{z})$ in $Z$.  Thus $|Z|=|\tilde{Z}|$.

Let $\langle \tilde{Z}_\alpha\rangle$ be a net of nonempty finite multisets in $\PP^N_{\tilde{K}}$.  We say that the net $\langle \tilde{Z}_\alpha\rangle$ is generic if, given any subnet $\langle \tilde{Z}_\beta\rangle$ of $\langle \tilde{Z}_\alpha\rangle$ and any proper Zariski-closed subset $W\subset\PP^N_{\tilde{K}}$, there exists $\beta_0$ such that $\tilde{Z}_\beta\cap W=\emptyset$ for all $\beta\geq\beta_0$.  We say that the net $\langle \tilde{Z}_\alpha\rangle$ is weakly generic if the limit
\begin{equation}\label{StronglyGeneric}
\lim_\alpha\frac{|\tilde{Z}_\alpha\cap W|}{|\tilde{Z}_\alpha|}=0
\end{equation}
holds for all proper Zariski-closed subsets $W\subset\PP^N_{\tilde{K}}$.  Note that a generic net is weakly generic: if $\langle \tilde{Z}_\alpha\rangle$ is generic, then $|\tilde{Z}_\alpha\cap W|=0$ for all sufficiently large $\alpha$, whereby $|\tilde{Z}_\alpha\cap W|/|\tilde{Z}_\alpha|\to0$.

\begin{thm}\label{MainEquiTheorem}
Let $\langle Z_\alpha\rangle$ be a net of nonempty finite multisets in $\Pbf^N_K$.  If the reduction $\langle \tilde{Z}_\alpha\rangle$ is weakly generic in $\PP^N_{\tilde{K}}$, then $\langle Z_\alpha\rangle$ is $\delta_{\gamma}$-equidistributed.
\end{thm}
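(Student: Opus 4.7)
The plan is to reduce Theorem~\ref{MainEquiTheorem} directly to the criterion of Theorem~\ref{DeltaGammaEquiTheorem}. That criterion requires us to show that, for every normalized homogeneous $f \in K[X]$ and every $0 < t < 1$,
\begin{equation*}
\lim_\alpha \frac{|\{z \in Z_\alpha \mid \lambda_f(z) < t\}|}{|Z_\alpha|} = 0.
\end{equation*}
The key geometric link is that the condition $\lambda_f(z) < 1$ forces $r(z)$ into the proper Zariski-closed subset $V(\tilde{f}) \subset \PP^N_{\tilde{K}}$ cut out by the reduced polynomial $\tilde{f}$. Once this containment is established, the weak genericity hypothesis will immediately yield the required vanishing, since $|Z_\alpha| = |\tilde{Z}_\alpha|$ by the definition of the reduction multiset.

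First I would verify the pointwise implication: given $z \in \Pbf^N_K$ with $\lambda_f(z) < 1$, choose a lift $\zeta \in \Sbf_K^N$ with $\pi(\zeta) = z$ (possible by Lemma~\ref{RestSurjLem}), so that $\lambda_f(z) = [f]_\zeta$. Then $[f]_\zeta < 1$ places $f \in \wp_\zeta$, hence $\tilde{f} \in \tilde{\wp}_\zeta$ by definition of the affine reduction map of $\S$~\ref{ReductionDefSect}. Unwinding the commutative diagram $(\ref{RedMaps})$, this means that the prime ideal corresponding to $r(z) = \pi_\sch(\tilde{\wp}_\zeta)$ contains $\tilde{f}$, i.e., $r(z) \in V(\tilde{f})$. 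Since $f$ is normalized, $\tilde{f} \neq 0$, so $W := V(\tilde{f})$ is a proper Zariski-closed subset of $\PP^N_{\tilde{K}}$.

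Now the chain of inclusions
\begin{equation*}
\{z \in Z_\alpha \mid \lambda_f(z) < t\} \;\subseteq\; \{z \in Z_\alpha \mid \lambda_f(z) < 1\} \;\subseteq\; \{z \in Z_\alpha \mid r(z) \in W\}
\end{equation*}
gives, counting with multiplicities and using the definition of $\tilde{Z}_\alpha$,
\begin{equation*}
\frac{|\{z \in Z_\alpha \mid \lambda_f(z) < t\}|}{|Z_\alpha|} \;\leq\; \frac{|\tilde{Z}_\alpha \cap W|}{|\tilde{Z}_\alpha|}.
\end{equation*}
Weak genericity of $\langle \tilde{Z}_\alpha \rangle$ applied to the proper Zariski-closed set $W$ sends the right-hand side to $0$, and then Theorem~\ref{DeltaGammaEquiTheorem} yields $\delta_\gamma$-equidistribution of $\langle Z_\alpha \rangle$.

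I expect no genuine obstacle here: the whole argument rests on recognizing $V(\tilde{f})$ as a proper Zariski-closed set when $f$ is normalized, and on translating the seminorm condition $[f]_\zeta < 1$ into membership in $\tilde{\wp}_\zeta$. The only mild point to take care of is that the implication $\lambda_f(z) < 1 \Rightarrow r(z) \in V(\tilde{f})$ is independent of the chosen lift $\zeta \in \Sbf_K^N$, which is automatic since $r$ is well-defined on $\Pbf^N_K$ via the commutative diagram.
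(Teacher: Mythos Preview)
Your proposal is correct and follows essentially the same route as the paper: both lift $z$ to $\Sbf_K^N$ via Lemma~\ref{RestSurjLem}, translate $\lambda_f(z)<1$ into the condition $r(z)\in V(\tilde{f})$, and then invoke weak genericity together with Theorem~\ref{DeltaGammaEquiTheorem}. The only cosmetic difference is that the paper notes the full equivalence $\lambda_f(z)<1 \iff r(z)\in V(\tilde{f})$ and writes an equality in place of your inequality, but your one-sided implication already suffices.
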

\begin{proof}
Given a normalized homogeneous polynomials $f\in K[X]$, let $\tilde{f}\in\tilde{K}[X]$ denote its reduction, and define 
\begin{equation*}
V(\tilde{f}) = \{\fp\in\PP^N_{\tilde{K}} \mid (\tilde{f})\subseteq\fp\}
\end{equation*}
to be the hypersurface in $\PP^N_{\tilde{K}}$ associated to $\tilde{f}$.  Given a multiset $Z_\alpha$ and a point $z\in Z_\alpha$, by Lemma~\ref{RestSurjLem} we may select $\zeta_z\in\Abf^{N+1}_K$ such that $\pi(\zeta_z)=z$ and $\|\zeta_z\|=1$.  It follows from the assumptions $H(f)=1$ and $\|\zeta_z\|=1$, along with the ultrametric inequality, that $\lambda_f(z)=[f]_{\zeta_z}\leq1$.  Moreover, it is easy to check using the definition of the reduction map that $\lambda_f(z)=[f]_{\zeta_z}<1$ if and only if $\tilde{z}=r(z)$ is contained in the hypersurface $V(\tilde{f})$.  Thus
\begin{equation}\label{MainEquiLimit}
\frac{|\{z\in Z_\alpha\mid\lambda_f(z)<1\}|}{|Z_\alpha|}=\frac{| V(\tilde{f})\cap\tilde{Z}_\alpha|}{|\tilde{Z}_\alpha|}.
\end{equation}
Since $\langle \tilde{Z}_\alpha\rangle$ is weakly generic in $\PP^N_{\tilde{K}}$, the right-hand-side of $(\ref{MainEquiLimit})$ goes to zero in the limit.  It follows that the limit $(\ref{DeltaGammaLimit})$ holds for all normalized homogeneous polynomials $f\in K[X]$ and each real number $0<t<1$, and we conclude that $\langle Z_\alpha\rangle$ is $\delta_{\gamma}$-equidistributed using Theorem~\ref{DeltaGammaEquiTheorem}.
\end{proof}

\noindent
{\em Remark.}  The converse of Theorem~\ref{MainEquiTheorem} is false.  For example, let $\langle x_\alpha\rangle$ be a net in $K$ such that $|x_\alpha|<1$ for all $\alpha$, but such that $|x_\alpha|\to1$.  Theorem~\ref{DeltaGammaEquiTheorem} implies that the net $\langle Z_\alpha\rangle$ of singleton sets $Z_\alpha=\{(1:x_\alpha)\}$ is $\delta_\gamma$-equidistributed.  But each point $(1:x_\alpha)$ reduces to the same point $(1:0)$ in $\PP^1(\tilde{K})$, so $\langle \tilde{Z}_\alpha\rangle$ is not weakly generic in $\PP^1_{\tilde{K}}$.  

In practice, however, the converse of Theorem~\ref{MainEquiTheorem} may hold for certain classes of nets $\langle Z_\alpha\rangle$ of particular interest.  We will see an example of this in the proof of Theorem~\ref{NonArchErg}.

\section{A Ergodic Equidistribution Theorem}\label{ErgodicSect}

\subsection{}  Define the unit torus $\TT^N_K$ in $\PP^N(K)$ by
\begin{equation*}
\TT^N_K=\{(a_0:a_1:\dots:a_N)\in\PP^N(K) \mid |a_0|=|a_1|=\dots=|a_N|=1\}.
\end{equation*}
Note that $\TT^N_K$ is a group under coordinate multiplication, with neutral element $1=(1:\dots:1)$.  In this section we will prove an equidistribution result, in the case of residue characteristic zero, for the sequence $\langle a^\ell\rangle_{\ell=1}^{+\infty}$ formed by taking the powers of a point $a\in\TT^N_K$.

\subsection{}  We begin with some algebraic preliminaries.  Let $k$ be an arbitrary field of characteristic zero, and fix homogeneous coordinates $(x_0:x_1:\dots:x_N)$ on $\PP^N$ over $k$.  We identify the group variety $\GG_m^N$ over $k$ with the subvariety of $\PP^N$ defined by $x_0x_1\dots x_N\neq0$; the group law on $\GG_m^N(k)$ is given by coordinate multiplication, with neutral element $1=(1:\dots:1)$.  

Given an arbitrary point $a\in \GG_m^N(k)$, we denote by $a_1,\dots,a_N$ the unique elements of $k^\times$ such that $a=(1:a_1:\dots:a_N)$.  We say that $a$ is degenerate if the elements $a_1,\dots,a_N$ are multiplicatively dependent in $k^\times$; otherwise we say that $a$ is non-degenerate.  

Consider a subgroup $\Lambda$ of the integer lattice $\ZZ^N$.  The group $\Lambda$ gives rise to an algebraic subgroup $G_\Lambda$ of $\GG_m^N$ by
\begin{equation*}
G_\Lambda(k)=\{a\in \GG_m^N(k) \mid a_1^{\ell_1}\dots a_N^{\ell_N}=1\text{ for all }(\ell_1,\dots,\ell_N)\in\Lambda\}.
\end{equation*}
Conversely, all algebraic subgroups of $\GG_m^N$ arise in this way (\cite{BombieriGubler} $\S$3.2).  It follows from this correspondence that a point $a\in\GG_m^N(k)$ is degenerate if and only if $a$ is contained in some proper algebraic subgroup of $\GG_m^N$.  Moreover, it is clear that $a$ is degenerate if and only if $a^\ell$ is degenerate for all nonzero $\ell\in\ZZ$.

\begin{prop}\label{NonDegenerateProp}
A point $a\in \GG_m^N(k)$ is non-degenerate if and only if, for each proper Zariski-closed subset $W$ of $\GG_m^N(k)$, there exist only finitely many integers $\ell$ such that $a^\ell\in W$.
\end{prop}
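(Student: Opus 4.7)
The plan is to prove the two implications separately; the easy direction follows directly from the definition of degeneracy, while the hard direction invokes Laurent's theorem of Mordell-Lang type for $\GG_m^N$, as foreshadowed in the introduction.

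For the ``if'' direction (finiteness for every $W$ implies non-degeneracy), I would argue contrapositively. If $a$ is degenerate, then by the discussion preceding the proposition, $a$ lies in some proper algebraic subgroup $G_\Lambda\subsetneq\GG_m^N$. Since $G_\Lambda$ is a subgroup containing $a$, every power $a^\ell$ lies in $G_\Lambda$. Taking $W=G_\Lambda$, which is proper and Zariski-closed, yields infinitely many integers $\ell$ with $a^\ell\in W$, as desired.

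For the ``only if'' direction, I would assume $a$ is non-degenerate, fix a proper Zariski-closed $W\subsetneq\GG_m^N$, and suppose for contradiction that $a^\ell\in W$ for infinitely many $\ell\in\ZZ$. The key step is to apply Laurent's theorem to the finitely generated cyclic subgroup $\Gamma=\langle a\rangle$ of $\GG_m^N(k)$ and the variety $W$: this asserts that $W\cap\Gamma$ is contained in a finite union $\bigcup_{i=1}^{r}b_i H_i$ of cosets, where each $H_i$ is an algebraic subgroup of $\GG_m^N$ and each coset $b_iH_i$ is itself contained in $W$. Infinitude of $W\cap\Gamma$ forces some $b_iH_i\cap\Gamma$ to be infinite. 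Since $\Gamma$ is cyclic, the subgroup $H_i\cap\Gamma$ has the form $\langle a^m\rangle$ for some integer $m$, and infinitude forces $m\neq 0$. Then $a^m\in H_i$, and because $b_iH_i\subseteq W$ is a proper subset of $\GG_m^N$, the subgroup $H_i$ is itself proper. Hence $a^m$ is degenerate, and by the observation preceding the proposition that degeneracy of $a$ is equivalent to degeneracy of any nonzero power $a^m$, $a$ is also degenerate, contradicting our assumption.

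The main obstacle is the application of Laurent's theorem, which is the only non-elementary input; everything else reduces to the dictionary between subgroups of $\ZZ^N$ and algebraic subgroups of $\GG_m^N$ together with the structure of cyclic groups. Without Laurent's theorem, one could not rule out the possibility that the powers $a^\ell$ accumulate on a proper subvariety $W$ in a way not explained by an algebraic subgroup relation among the coordinates of $a$.
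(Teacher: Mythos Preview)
Your proof is correct and follows essentially the same route as the paper's: the easy direction is handled contrapositively, and the hard direction applies Laurent's theorem to the cyclic group generated by $a$, then uses that any two powers of $a$ lying in the same coset $b_iH_i$ differ by an element of the proper subgroup $H_i$, contradicting non-degeneracy. Two small points you leave implicit but should state: non-degeneracy forces $a$ to be non-torsion (so $\Gamma\cong\ZZ$), and the passage from ``$b_iH_i\cap\Gamma$ infinite'' to ``$H_i\cap\Gamma$ infinite'' requires the observation that if $a^{\ell_0}\in b_iH_i$ then $b_iH_i\cap\Gamma=a^{\ell_0}(H_i\cap\Gamma)$; the paper's version sidesteps this by directly noting that $a^\ZZ\cap yG$ can contain at most one element.
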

\begin{proof}
The ``if'' direction is trivial.  For if $a$ is degenerate then $a\in G(k)$ for some proper algebraic subgroup $G$ of $\GG_m^N$, and therefore $a^\ell \in G(k)$ for all $\ell\in\ZZ$.

To prove the ``only if'' direction, assume that $a$ is non-degenerate, and consider a proper Zariski-closed subset $W$ of $\GG_m^N(k)$.  Denote by $a^\ZZ$ the cyclic subgroup of  $\GG_m^N(k)$ generated by $a$.  Since $a$ is non-degenerate, it is non-torsion, and therefore in order to complete the proof it is enough to show that $a^\ZZ\cap W$ is finite.  Replacing $W$ with the Zariski-closure of $a^\ZZ\cap W$, we may assume without loss of generality that $a^\ZZ\cap W$ is Zariski-dense in $W$.  A result of Laurent (\cite{Laurent}; see also \cite{BombieriGubler} Thm.~7.4.7) implies that, since $a^\ZZ\cap W$ is Zariski-dense in $W$, we must have $W = \cup_{j=1}^Jy_jG_j$ for some finite set $y_1,\dots,y_J\in\GG_m^N(k)$ and some finite collection $G_1,\dots,G_J$ of proper algebraic subgroups of $\GG_m^N$.  Therefore, in order to show that $a^\ZZ\cap W$ is finite, it suffices to show that $a^\ZZ\cap yG(k)$ is finite for an arbitrary $y\in\GG_m^N(k)$ and an arbitrary proper algebraic subgroup $G$ of $\GG_m^N$.  In fact, $a^\ZZ\cap yG(k)$ can contain at most one point.  For if $a^{\ell}$ and $a^{\ell'}$ are elements of $yG(k)$ for some distinct integers $\ell, \ell'\in\ZZ$, then $a^{\ell-\ell'}\in G(k)$, implying that $a^{\ell-\ell'}$ is degenerate.  This contradicts the assumption that $a$ is non-degenerate.
\end{proof}

\noindent
{\em Remark.}  The result of Laurent used in this proof is the $\GG_m^N$ case of what is commonly called the Lang (sometimes Mordell-Lang) conjecture.  It is now a much more general theorem, holding for finite-rank subgroups of semi-abelian varieties, due to Laurent, Faltings, Vojta, and McQuillen; see \cite{HindrySilvermanBook} $\S$~F.1.1 for a survey.

\subsection{}  We return to our complete non-archimedean field $K$.  Observe that, given a point $a\in\PP^N(K)$, we have $a\in\TT^N_K$ if and only if $\tilde{a}\in\GG_m^N(\tilde{K})$.  

\begin{thm}\label{NonArchErg}
Assume that the residue field $\tilde{K}$ has characteristic zero.  Let $a\in\TT^N_K$, and for each integer $\ell\geq1$, define $Z_\ell=\{a,a^2,\dots,a^\ell\}$, considered as a multiset in $\TT^N_K\subset\PP^N(K)\subset\Pbf_K^N$ of cardinality $|Z_\ell|=\ell$.  The sequence $\langle Z_\ell\rangle_{\ell=1}^{\infty}$ is $\delta_\gamma$-equidistributed in $\Pbf_K^N$ if and only if the point $\tilde{a}$ is non-degenerate in $\GG_m^N(\tilde{K})$.
\end{thm}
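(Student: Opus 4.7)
The plan is to treat the two directions separately: the forward direction ($\tilde{a}$ non-degenerate $\Rightarrow$ equidistribution) reduces cleanly to the machinery of Section~\ref{ReductionSect}, while the converse requires exhibiting a witness polynomial and performing a binomial calculation that uses $\ch(\tilde{K}) = 0$ in an essential way.

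For the forward direction, suppose $\tilde{a}$ is non-degenerate, and consider the reductions $\tilde{Z}_\ell = \{\tilde{a}, \tilde{a}^2, \dots, \tilde{a}^\ell\} \subset \GG_m^N(\tilde{K}) \subset \PP_{\tilde{K}}^N$. Given any proper Zariski-closed subset $W \subset \PP_{\tilde{K}}^N$, Zariski-density of $\GG_m^N$ in $\PP^N$ forces $W \cap \GG_m^N$ to be a proper Zariski-closed subset of $\GG_m^N$. Proposition~\ref{NonDegenerateProp} then implies that only finitely many $\tilde{a}^k$ lie in $W$, so $|\tilde{Z}_\ell \cap W|$ is bounded in $\ell$ while $|\tilde{Z}_\ell| = \ell \to \infty$. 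Hence $\langle \tilde{Z}_\ell\rangle$ is weakly generic in $\PP_{\tilde{K}}^N$, and $\delta_\gamma$-equidistribution of $\langle Z_\ell\rangle$ follows from Theorem~\ref{MainEquiTheorem}.

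For the converse, assume $\tilde{a}$ is degenerate. After scaling so that $a_0 = 1$, choose integers $m_1, \dots, m_N$, not all zero, satisfying $\prod_i \tilde{a}_i^{m_i} = 1$, and split $m_i = p_i - q_i$ with $p_i, q_i \geq 0$. Setting $P = \sum p_i$ and $Q = \sum q_i$, the polynomial $f(X) = X_0^Q \prod_i X_i^{p_i} - X_0^P \prod_i X_i^{q_i} \in K[X]$ is normalized homogeneous of degree $P + Q$, and its reduction vanishes on every $\tilde{a}^k$. Evaluating at $a^k = (1: a_1^k: \dots: a_N^k)$ gives $\lambda_f(a^k) = |c^k - 1|$, where $c = \prod_i a_i^{m_i} \in 1 + K^{\circ\circ}$. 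Writing $c = 1 + \delta$ with $|\delta| < 1$ and expanding $c^k - 1 = \sum_{j=1}^k \binom{k}{j}\delta^j$, the hypothesis $\ch(\tilde{K}) = 0$ yields $|j| = 1$ for every nonzero integer $j \in K$, so the leading term satisfies $|k\delta| = |\delta|$ while each higher term has absolute value at most $|\delta|^2 < |\delta|$. The ultrametric inequality then gives $|c^k - 1| = |\delta|$ for every $k \geq 1$ (and trivially $0$ if $c = 1$). Consequently $\frac{1}{\ell}\sum_{k=1}^\ell \lambda_f(a^k) \leq |\delta| < 1 = \lambda_f(\gamma)$, and Corollary~\ref{MainCor} rules out $\delta_\gamma$-equidistribution.

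The main obstacle is precisely the uniform bound $|c^k - 1| = |c - 1|$ in the converse. A priori the values $|c^k - 1|$ could cluster arbitrarily close to $1$, which would permit the Ces\`aro averages to creep up to $1$ even though every individual term stays strictly smaller; the uniform bound is what forbids this, and it rests squarely on $\ch(\tilde{K}) = 0$, because in positive residue characteristic the factor $|k|$ can be small and the linear term of the binomial expansion need no longer dominate. Apart from this step (and the invocation of Laurent's theorem through Proposition~\ref{NonDegenerateProp}), the argument is a direct application of the weak-convergence criterion from Section~\ref{MainThmSect}.
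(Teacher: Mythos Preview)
Your proof is correct and follows the same strategy as the paper: the forward direction via Proposition~\ref{NonDegenerateProp} and Theorem~\ref{MainEquiTheorem}, and the converse by constructing a normalized homogeneous polynomial $f$ witnessing failure of the criterion in Corollary~\ref{MainCor}.

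However, you misidentify where the hypothesis $\ch(\tilde{K})=0$ is actually used. The bound $|c^k-1|\leq|c-1|$ holds in \emph{any} residue characteristic: writing $c^k-1=\sum_{j=1}^{k}\binom{k}{j}\delta^j$, each binomial coefficient is an integer and hence has absolute value at most~$1$, so $\bigl|\binom{k}{j}\delta^j\bigr|\leq|\delta|^j\leq|\delta|$, and the ultrametric inequality gives $|c^k-1|\leq|\delta|$ directly. Your sharper equality $|c^k-1|=|\delta|$ does require $\ch(\tilde{K})=0$ to isolate the linear term, but only the inequality is needed for the argument, so the values $|c^k-1|$ cannot ``creep up to $1$'' in any characteristic. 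The paper makes this explicit in the discussion following the proof, noting that the ``only if'' direction holds with the same proof when $\ch(\tilde{K})\neq0$. The characteristic-zero assumption enters only in the forward direction, through Laurent's theorem underlying Proposition~\ref{NonDegenerateProp}.
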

\begin{proof}
Assume that $\tilde{a}$ is non-degenerate in $\GG_m^N(\tilde{K})$.  By Proposition~\ref{NonDegenerateProp}, $\tilde{a}^\ZZ\cap W$ is finite for any proper Zariski-closed subset $W$ of $\PP^N(\tilde{K})$, which implies that the sequence $\langle \tilde{Z}_\ell\rangle_{\ell=1}^{+\infty}$ is weakly generic in $\PP^N(\tilde{K})$.  By Theorem~\ref{MainEquiTheorem}, we conclude that $\langle Z_\ell\rangle_{\ell=1}^{\infty}$ is $\delta_\gamma$-equidistributed.

Conversely, suppose that $\tilde{a}$ is degenerate in $\GG_m^N(\tilde{K})$.  Writing 
\begin{equation*}
a=(1:a_1:\dots:a_N)\in\TT^N_K\subset\PP^N(K)
\end{equation*}
the fact that $\tilde{a}$ is degenerate means that $\tilde{a}_1^{\ell_{1}}\dots \tilde{a}_N^{\ell_{N}}=1$ in $\tilde{K}$ for a nonzero vector $(\ell_1,\dots,\ell_N)\in\ZZ^N$.  Therefore the element $A:=a_1^{\ell_{1}}\dots a_N^{\ell_{N}}-1\in K$ satisfies $|A|<1$.  Select an integer $r\geq0$ such that $\ell_n+r\geq0$ for all $1\leq n\leq N$, let $\ell=\sum_{n=1}^{N}\ell_n$, and define $f\in K[X]$ by 
\begin{equation*}
f(X)=X_1^{\ell_{1}+r}\dots X_N^{\ell_{N}+r} - X_0^\ell(X_1\dots X_N)^r.
\end{equation*}
Note that $f$ is nonzero, homogeneous, and satisfies $H(f)=1$.  In particular,
\begin{equation}\label{EquiFail1}
\int \lambda_fd\delta_\gamma=\lambda_f(\gamma)=[f]_{\delta_{0,1}}=H(f)=1.
\end{equation}
On the other hand, it is easy to check that $|f(1,a_1,\dots,a_N)|=|A|$, and that more generally $|f(1,a_1^j,\dots,a_N^j)|\leq|A|$ for all integers $j\geq1$.  Therefore
\begin{equation}\label{EquiFail2}
\frac{1}{|Z_\ell|}\sum_{z\in Z_\ell}\lambda_f(z)=\frac{1}{\ell}\sum_{j=1}^\ell|f(1,a_1^j,\dots,a_N^j)|\leq|A|<1.
\end{equation}
Letting $\ell\to+\infty$, $(\ref{EquiFail1})$ and $(\ref{EquiFail2})$ together show that the sequence $\langle Z_\ell\rangle_{\ell=1}^{\infty}$ fails the criterion for $\delta_\gamma$-equidistribution stated in Corollary~\ref{MainCor}.
\end{proof}

\subsection{}  Theorem~\ref{NonArchErg} is a non-archimedean analogue of the following classical equidistribution result of Weyl \cite{Weyl}.  As in the non-archimedean case, define the unit torus $\TT^N_\CC$ in $\PP^N(\CC)$ by
\begin{equation*}
\TT^N_\CC=\{(a_0:a_1:\dots:a_N)\in\PP^N(\CC) \mid |a_0|=|a_1|=\dots=|a_N|=1\}.
\end{equation*}
Then $\TT^N_\CC$ is a compact topological group, and as such it carries a unique normalized Haar measure.

\begin{thm}[Weyl]\label{WeylThm}
Let $a\in\TT^N_\CC$, and for each integer $\ell\geq1$, define $Z_\ell=\{a,a^2,\dots,a^\ell\}$, considered as a multiset in $\TT^N_\CC$ of cardinality $|Z_\ell|=\ell$.  The sequence $\langle Z_\ell\rangle_{\ell=1}^{\infty}$ is Haar-equidistributed in $\TT^N_\CC$ if and only if $a$ is non-degenerate in $\GG_m^N(\CC)$.
\end{thm}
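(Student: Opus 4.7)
The plan is to follow the classical Fourier-analytic route, which runs in close structural parallel with the proof of Theorem~\ref{NonArchErg}: reduce Haar-equidistribution to checking averages of a dense subalgebra of continuous functions on $\TT^N_\CC$, then invoke the group structure to evaluate those averages explicitly. The dense subalgebra will be the one generated over $\CC$ by the characters of the compact abelian group $\TT^N_\CC$.

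Writing a point of $\TT^N_\CC$ as $(1:a_1:\dots:a_N)$, the characters are
\begin{equation*}
\chi_{\mathbf m}(1:a_1:\dots:a_N)=a_1^{m_1}\cdots a_N^{m_N},\qquad \mathbf m=(m_1,\dots,m_N)\in\ZZ^N.
\end{equation*}
The identities $\chi_{\mathbf m}\chi_{\mathbf m'}=\chi_{\mathbf m+\mathbf m'}$ and $\overline{\chi_{\mathbf m}}=\chi_{-\mathbf m}$ show that the $\CC$-span of $\{\chi_{\mathbf m}\}_{\mathbf m\in\ZZ^N}$ is a conjugation-closed, point-separating subalgebra of the $\CC$-valued continuous functions on $\TT^N_\CC$, and is therefore uniformly dense by the complex form of the Stone-Weierstrass theorem. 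Extending the equidistribution condition $\CC$-linearly to complex test functions, a routine approximation argument then reduces Haar-equidistribution of $\langle Z_\ell\rangle$ to checking the single family of limits
\begin{equation*}
\frac{1}{\ell}\sum_{j=1}^\ell\chi_{\mathbf m}(a^j)\longrightarrow\int\chi_{\mathbf m}\,d\mu \qquad (\mathbf m\in\ZZ^N),
\end{equation*}
where $\mu$ is normalized Haar measure on $\TT^N_\CC$.

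For $\mathbf m=\mathbf 0$ both sides equal $1$ trivially. For $\mathbf m\neq\mathbf 0$ the right-hand side equals $0$ by translation-invariance of $\mu$ (pick any $y\in\TT^N_\CC$ with $\chi_{\mathbf m}(y)\neq 1$, so that the identity $\int\chi_{\mathbf m}\,d\mu=\chi_{\mathbf m}(y)\int\chi_{\mathbf m}\,d\mu$ forces the integral to vanish), while the left-hand side is the geometric average $\tfrac{1}{\ell}\sum_{j=1}^\ell\chi_{\mathbf m}(a)^j$. If $a$ is non-degenerate in $\GG_m^N(\CC)$, then $a_1,\dots,a_N$ are multiplicatively independent in $\CC^\times$, so $\chi_{\mathbf m}(a)\neq 1$ for every nonzero $\mathbf m$, and summing the geometric series bounds this average in modulus by $\tfrac{2}{\ell\,|1-\chi_{\mathbf m}(a)|}$, which tends to $0$. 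Conversely, if $a$ is degenerate then $\chi_{\mathbf m}(a)=1$ for some nonzero $\mathbf m$, and the corresponding averages are identically $1\neq 0$, which rules out equidistribution. The Stone-Weierstrass density step plays the role of the density of $\Acal(\Pbf^N_K)$ from the proof of Theorem~\ref{MainTheorem}, while the elementary geometric-series bound takes over from the appeal to Laurent's theorem in Theorem~\ref{NonArchErg}; in this archimedean setting there is no deeper obstacle.
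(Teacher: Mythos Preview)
Your argument is correct and is precisely the classical Fourier-analytic proof attributed to Weyl. Note, however, that the paper does not actually supply its own proof of Theorem~\ref{WeylThm}: it merely states the result as a classical theorem, cites Weyl's original paper, and remarks that ``Weyl's original proof of Theorem~\ref{WeylThm} uses Fourier analysis, but there exists an alternate, ergodic-theoretic proof, see Furstenberg~\cite{Furstenberg} Ch.~3.'' So there is nothing in the paper to compare your proof against, beyond the observation that you have reconstructed exactly the Fourier-analytic route the paper alludes to. Your structural parallel with Theorem~\ref{MainTheorem} (Stone--Weierstrass density of a distinguished subalgebra) and with Theorem~\ref{NonArchErg} (reducing equidistribution to a computable family of averages) is apt and well put.
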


An important difference between Theorems~\ref{NonArchErg} and \ref{WeylThm} stems from the fact that, in the non-archimedean case, the assumption that $\ch(\tilde{K})=0$ ensures that $\tilde{K}$ has infinitely many elements, which implies that the field $K$ is not locally compact.  Consequently, the unit torus $\TT^N_K$ is noncompact and thus has no Haar measure in the traditional sense.  On the other hand, observe that $\TT^N_K$ is contained in the compact Berkovich unit torus 
\begin{equation*}
\Tbf^N_K=\{\pi(\zeta)\mid\zeta\in \Abf^{N+1}_K\text{ and } [X_0]_\zeta=[X_1]_\zeta=\dots=[X_N]_\zeta=1\},
\end{equation*}
and that the Dirac measure $\delta_\gamma$ supported on the Gauss point $\gamma\in\Tbf^N_K$ is invariant under the translation action of the group $\TT^N_K$ on $\Tbf^N_K$.  Thus $\delta_\gamma$ is a natural substitute for Haar measure in this setting.

Analogues of Weyl's Haar-equidistribution result have been investigated over the locally compact non-archimedean field $\QQ_p$, at least in the case $N=1$; see Bryk-Silva \cite{BrykSilva} and Coelho-Parry \cite{CoelhoParry}.

As pointed out by the referee, several things can be said in the direction of Theorem~\ref{NonArchErg} when the residue field $\tilde{K}$ has characteristic $p\neq0$.  First, the ``only if'' direction of the theorem continues to hold, with the same proof.  If $\tilde{K}$ is algebraic over its prime field $\FF_p$, then the statement of Theorem~\ref{NonArchErg} holds trivially, since all points of $\GG_m^N(\overline{\FF}_p)$ are torsion and therefore degenerate.  Finally, the statement of Theorem~\ref{NonArchErg} continues to hold for an arbitrary field $K$ of residue characteristic $p\neq0$ in the one-dimensional case.  For observe that an element $\tilde{a}$ in $\GG_m^1(\tilde{K})$ is degenerate if and only if it is torsion.  Using the trivial fact that the cyclic subgroup $\tilde{a}^\ZZ$ of $\GG_m^1(\tilde{K})$ is either finite or Zariski-dense, there is no need for Laurent's theorem, and therefore no need to assume that $\ch(\tilde{K})=0$.  We do not know whether the statement of Theorem~\ref{NonArchErg} holds for residue characteristic $p\neq0$ and $N\geq2$.

Finally, we point out that Theorems~\ref{NonArchErg} and \ref{WeylThm} can be viewed as examples of a more general class of equidistribution results arising naturally in ergodic theory.  Let $\Xcal$ be a compact Hausdorff space, let $T:\Xcal\to\Xcal$ be an automorphism, let $\mu$ be a $T$-invariant unit Borel measure on $\Xcal$, and let $x\in\Xcal$ be a point.  One of the basic goals of ergodic theory is to establish conditions under which the sequence $\langle Z_\ell\rangle_{\ell=1}^{+\infty}$ of multisets $Z_\ell=\{T(x),T^2(x),\dots,T^\ell(x)\}$ is $\mu$-equidistributed; see for example Furstenberg \cite{Furstenberg} and Lindenstrauss \cite{Lindenstrauss} for discussions of such results with a particular eye toward arithmetic applications.  

Weyl's original proof of Theorem~\ref{WeylThm} uses Fourier analysis, but there exists an alternate, ergodic-theoretic proof, see Furstenberg \cite{Furstenberg} Ch. 3.  It would be interesting to pursue non-archimedean equidistribution results such as Theorem~\ref{NonArchErg} from an ergodic-theoretic angle.  In view of Proposition~\ref{NonDegenerateProp} and it's reliance on Laurent's theorem \cite{Laurent}, it is especially intriguing to consider the possibility of deeper connections between ergodic theory and questions of Mordell-Lang type.

\medskip

\medskip

\end{document}